\newtheorem{theorem}{Theorem}
\newtheorem{lemma}[theorem]{Lemma}
\newtheorem{proposition}[theorem]{Proposition}
\newtheorem{remark}[theorem]{Remark}
\newenvironment{proof}[1][Proof]{\noindent\textbf{#1.} }{\ \rule{0.5em}{0.5em}}
\theoremstyle{plain}{\theorembodyfont{\rmfamily}

\newenvironment{deflist}[1][\quad\quad]%
{\begin{list}{}{%
\settowidth{\labelwidth}{\textrm{#1~}}%
\setlength{\leftmargin}{\labelwidth+\labelsep}}}
{\end{list}}

\begin{document}

\title{On discretization in time in simulations of particulate flows}
\author{Matthieu Hillairet, Alexei Lozinski and Marcela Szopos\thanks{
e-mails: matthieu.hillairet@math.univ-toulouse.fr, alexei.lozinski@math.univ-toulouse.fr, szopos@mip.ups-tlse.fr.
}\\
Institut de Math\'{e}matiques de Toulouse, Universit\'{e} de Toulouse, France
}
\date{}
\maketitle

\begin{abstract}
We propose a time discretization scheme for a class of ordinary differential equations arising in simulations of fluid/particle flows.
The scheme is intended to work robustly in the lubrication regime when the distance between two particles immersed in the fluid or between a particle and the wall
tends to zero. The idea consists in introducing a small threshold for the particle-wall distance below which the real trajectory of the particle
is replaced by an approximated one where the distance is kept equal to the threshold value. The error of this approximation is estimated both theoretically and by numerical experiments. 
Our time marching scheme can be easily incorporated into a full simulation method where the velocity of the fluid is obtained by a numerical solution to Stokes or Navier-Stokes equations. 
We also provide a derivation of the asymptotic expansion for the lubrication force (used in our numerical experiments) acting on a disk immersed in a Newtonian fluid and approaching the wall. 
The method of this derivation is new and can be easily adapted to other cases.
\end{abstract}

\section{Introduction}
One of the challenges for fluid/particle flow simulations is to provide an accurate resolution of the lubrication regime 
when the distance between two particles immersed in the fluid or between a particle and the wall becomes very small. Taking aside the problems related to the discretization in space 
(extremly high gradients of the velocity in the narrow gap between the particle and the wall, for example), we focus our attention in this article on the discretization in time. The lubrication regime 
is characterized by very high magnitude of the drag force (which can be referred to as the lubrication force in this case). Very small step sizes in time should be thus employed in order to obtain
a physically acceptable solution. Our idea is to prohibit the particle from approaching too closely the wall during a simulation. We shall thus choose a threshold $q_s$ for the distance $q$
between the particle and the wall and replace the true trajectory of the particle by an approximated one, in which the distance $q$ is kept equal to $q_s$ until an eventual rebound of the particle
from the wall. The moment of rebound is predicted using an auxiliary quantity (a crude  approximation of the velocity) that is computed all along the period of time when the particle is stuck
at the distance $q_s$.
This approach reminds the gluey particle model of \cite{Maury2007, Lefebvre09} where $q_s$ is set to zero and the limit of vanishing viscosity is considered. 
However, our motivations are quite different from that behind the gluey particle model. This model is intended as a simple alternative to the standard governing equations of Navier-Stokes type, 
eventually corrected by taking into account the roughness of the particle surface. On the other hand, our approach is to take the standard fluid equations for granted 
(assuming the particle surface to be smooth) and to provide a tool for a robust time discretization of them. It means, in particular, that we would need an accurate enough method 
valid for any given value of the viscosity, not necessarily small. Note also, that our threshold $q_s$ will typically depend on the time step size, so that it is indeed a numerical device 
and it has no physical meaning.

The plan of the article is as follows: we start by reminding the governing equations in a general setting and by explaining in more detail the difficulties related to the
simulations in the lubrication regime in the next section. Section 3 is the core of the paper. The idea of the threshold is rigorously introduced and studied there for the model ordinary differential equation
representing the essence of the general setting in the simplest case of a circular (or spherical) particle approaching the wall. The discussion is held on the continuous level in Section 3.
The discretization in time is introduced in Section 4 where several implementations of our idea are proposed on the discrete level followed by numerical experiments. 
We also include an appendix detailing a derivation of the asymptotic expansion for the lubrication force acting on a disk approaching the wall. 
The method of this derivation is new and can be easily adapted to other cases.

\section{Motivations: governing equations for the fluid/particle flows and some difficulties arising in their simulations}\label{general}

The general setting of this work is a study of the motion of a rigid particle immersed in a viscous, incompressible fluid with a particular emphasis on situations when
the particle approaches the plane. To set the notations, we assume in general that the fluid (with the particle inside) fills a fixed domain $\Omega\in\mathbb{R}^d$
with $d=2$ or $3$, while the region occupied by the particle $\mathcal B_t\subset\Omega$ varies with time $t$. We denote the time-dependent fluid domain $\mathcal F_t$ so that 
$ {\Omega= \overline{\mathcal B_t}\cup
\mathcal F_t}$ at any time $t$.  Supposing that the inertial effects are negligible in the fluid and the no-slip conditions are valid on the boundaries of $\Omega$
and $\mathcal{B}_t$, the fluid motion is governed by the Stokes equations
\begin{figure}[h]
 \psfrag{R}{$R$}
 \psfrag{y}{$y$}
 \psfrag{x}{$x$}
 \psfrag{B}{$\mathcal{B}_t$}
 \psfrag{q}{$q$}
 \psfrag{Ve2}{$\mathbf{V}$}
 \psfrag{omega}{$\mathcal{F}_t=\Omega\setminus\mathcal{B}_t$}
     \centerline{
         \includegraphics[scale=0.5]{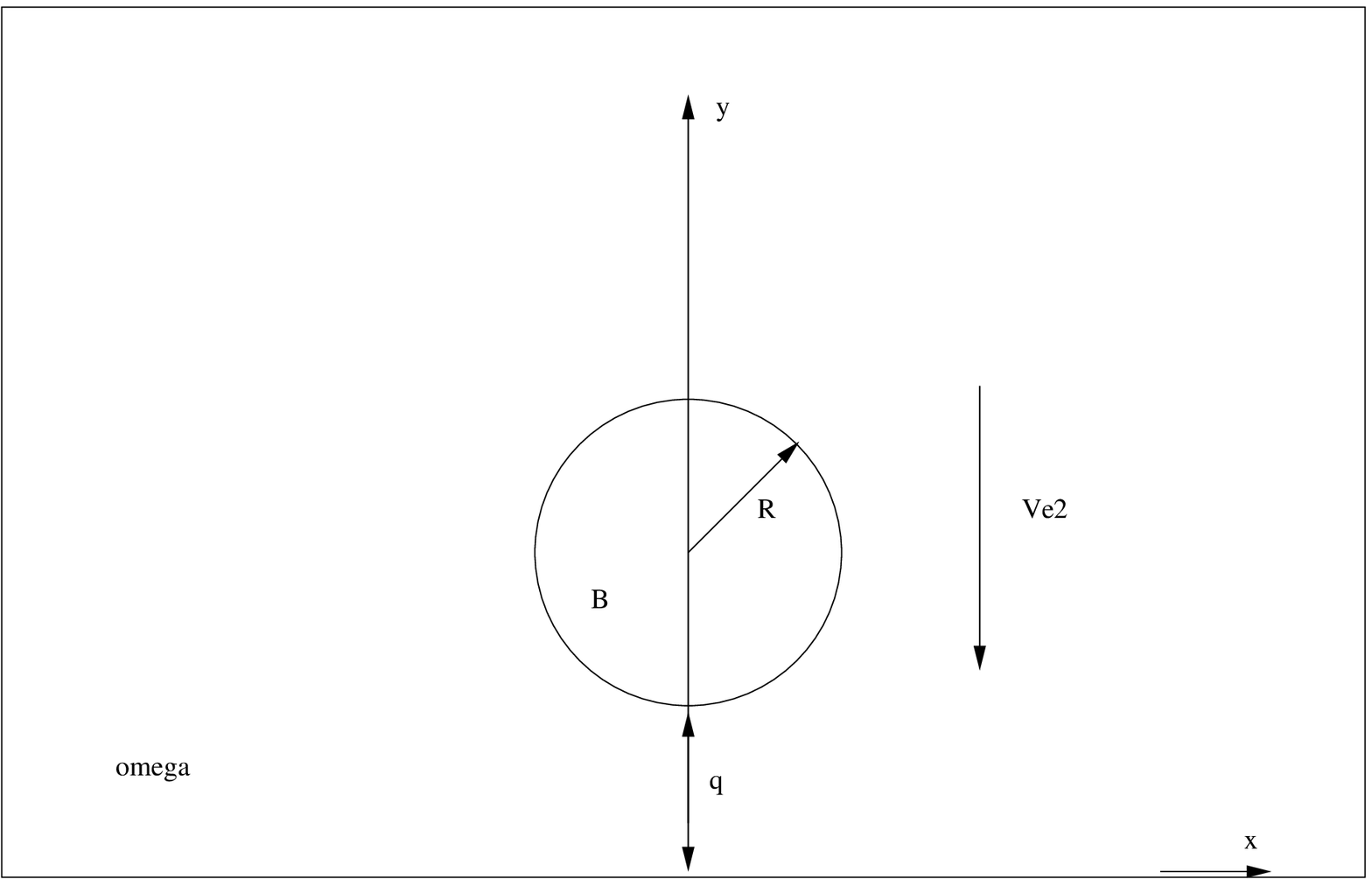}
     }
     \caption{Notations}
     \label{fig_Stokes}
\end{figure}
\begin{equation}\label{stokes}
    \begin{cases}
   -\nu \Delta\mathbf{u} + \nabla p = \rho_f\mathbf{g},\ &\textrm{in}\ \mathcal{F}_t\\
    \nabla\cdot\mathbf{u} = 0, \ &\textrm{in}\ \mathcal{F}_t\\
    \mathbf{u} = \mathbf{0},\ &\textrm{on}\ \partial\Omega\\
    \mathbf{u} = \mathbf{V}+\omega\times\mathbf{r}\ &\textrm{on}\ \partial\mathcal{B}_t
    \end{cases}
\end{equation}
where $\mathbf{u}$ and $p$ are the velocity and the pressure in the fluid, $\nu$ and $\rho_f$ are the viscosity and the density of the fluid, $\mathbf{g}$ is the external force,
$\mathbf{V}=\mathbf{V}(t)$ and $\omega=\omega(t)$ are the translational and angular
velocities of the rigid body $\mathcal{B}_t$, $\mathbf{r} =\mathbf{x}-\mathbf{G}$ is the vector pointing from the center of mass of the particle $\mathbf{G}$ to a point $\mathbf{x}$ on its boundary.
The more realistic Navier-Stokes equations may be accomodated into the framework \eqref{stokes} by including the convective term $\mathbf{u}\cdot\nabla\mathbf{u}$ 
into $\mathbf{g}$.

The fluid exerts a net force $\mathbf{F}$ and a torque $\mathbf{T}$ on the particle given by
\begin{align}
\mathbf{F} &= \mathbf{F}(\mathcal{B}_t,\mathbf{V},\omega) = \int_{\partial \mathcal B_t} (2 \nu D(\mathbf u) - p I) \mathbf n \text{d$\sigma$}, \label{eq_drag}\\
\mathbf{T} &= \mathbf{T}(\mathcal{B}_t,\mathbf{V},\omega) = \int_{\partial \mathcal B_t} \mathbf{r}\times(2 \nu D(\mathbf u) - p I) \mathbf n \text{d$\sigma$}, \notag
\end{align}
where $D(\mathbf u)$ stands for the symmetric gradient of $\mathbf u$ and $\mathbf n$ is the unit normal
vector on $\partial \mathcal B_t$ directed towards the fluid domain.
Note that $\mathbf{F}$ and $ {\mathbf{T}}$ are indeed functions of only the placement of the particle $\mathcal{B}_t$ and its translational and angular velocities since
the velocity $\mathbf{u}$ and the pressure $p$ are uniquely determined in the fluid by the parameters $\mathcal{B}_t$, $\mathbf{V}$ and $\omega$
as the solution to the Stokes equations (\ref{stokes}). Moreover, the dependence of $\mathbf{F}$ and $ {\mathbf{T}}$ on $\mathbf{V}$ and $\omega$ is linear. Using these
notations we write out the equations of motion of the particle as follows
\begin{equation}\label{rigpt}
    \begin{cases}
    m\frac{d\mathbf{V}}{dt} = \mathbf{F}(\mathcal{B}_t,\mathbf{V},\omega) + m\mathbf{g}, \\
    \mathbb{I}_t\frac{d\omega}{dt} + \omega\times\mathbb{I}_t\omega= \mathbf{T}(\mathcal{B}_t,\mathbf{V},\omega), \\
    \end{cases}
\end{equation}
where $m$ is the mass of the particle and $\mathbb{I}_t$ is its inertia tensor, expressed in the fixed Cartesian frame and
thus dependent on time.  Equations (\ref{rigpt}) are coupled with the equations describing the propagation of the particle, { {\it \textit{i.e.}}} $\dot{\mathbf{G}}=\mathbf{V}$
for the center of mass and $\dot{{ {\mathbf{r}_i}}}=\omega\times{ {\mathbf{r}_i}}$, $i=1,\ldots,d$, for the vectors ${ {\mathbf{r}_i}}$ fixed in the particle.
The force $\mathbf{F}$ is a sum of the Archimedes force due to gravity and of the drag force which is purely hydrodynamic, { {\it \textit{i.e.}}} obtained from (\ref{stokes}) by setting
$\mathbf{g}=0$. The particularity of the drag is that it tends very rapidly to $\infty$ when the  particle approaches the wall, thus preventing collisions between them. Indeed, it has been proved in \cite{Hillairet07} (2D case), \cite{HillairetTakahashi09} (3D case), that a smooth rigid body embedded in a viscous fluid cannot touch the wall in finite time. In the regime of very small distance between the particle and the wall, the drag force is also known as the lubrication force
and it is notoriously difficult to take into account in a numerical simulation.

It is noteworthy that these considerations are valid only for smooth surfaces; however, modelling surface roughness of the wall or particle is a much more delicate issue. 
Several experimental (\cite{SmartL1989}, \cite{Feuillebois1993}, \cite{Feuillebois2004}, \cite{Vinogr2006}) and theoretical (\cite{Feuillebois2004}, \cite{Feuillebois2005}) 
works propose in this case to modify the expression for the lubrication drag force by introducing a shift to the distance to the wall, of magnitude strictly lower than the roughness size. 
This physically means that roughness decreases the dissipation in the system, and that the interaction is similar to that between equivalent smooth surfaces, located at some intermediate position, 
between the peaks and the valleys of asperities. 

To simulate the motion of the particle governed by equation described above, one should be able to solve numerically the Stokes system (\ref{stokes}) for any given particle position $\mathcal{B}_t$
and for any given velocities $\mathbf{V}$ and $\omega$. This is a formidable task in itself especially because the position of the particle is not known a priori but changes with time. We do not make
precise the choice of the numerical method for the Stokes system. We just assume  for the moment that the force $\mathbf{F}$ and the torque $\mathbf{T}$ can be computed for any $\mathcal{B}_t$,
$\mathbf{V}$, $\omega$ but this computation is in general very expensive. Our primary goal in this article is to devise an efficient discretization in time of (\ref{rigpt}) using as few as possible
solutions of the Stokes system. The simplest idea is to use the following scheme
\begin{align}
    & m\frac{\mathbf{V}^k-\mathbf{V}^{k-1}}{\Delta t} = \mathbf{F}(\mathcal{B}_{t_{k-1}},\mathbf{V^k},\omega^k) + m\mathbf{g}(t_k), \label{s1}\\
    & \mathbb{I}_{t_{k-1}}\frac{\omega^k-\omega^{k-1}}{\Delta t}  + \omega^{k-1}\times\mathbb{I}_{t_{k-1}}\omega^k = \mathbf{T}(\mathcal{B}_{t_{k-1}},\mathbf{V^k},\omega^k),  \label{s2}\\
    & \frac{\mathbf{G}^k-\mathbf{G}^{k-1}}{\Delta t} = \mathbf{V}^k,  \label{s3}\\
    & \frac{{ {\mathbf{r}_i}}^k-{ {\mathbf{r}_i}}^{k-1}}{\Delta t} = { {\mathbf{r}_i}}^k\times\omega^k, \quad i=1,\ldots,d.  \label{s4}
\end{align}
We have introduced here the uniform grid in time $t_k=k\Delta t$ and have denoted the quantities computed at the time step $t_k$ by the superscript $k$. The idea behind the scheme
(\ref{s1})--(\ref{s4}) is to compute first the velocities by (\ref{s1})--(\ref{s2}) and then to propagate the particle using the last available values of the velocities. The equations
(\ref{s1})--(\ref{s2}) are thus coupled together and also coupled with the solution of the Stokes system on a fixed geometry given by position of the particle $\mathcal{B}_{t_{k-1}}$
on the previous time step $t_{k-1}$. The cost of such a computation is normally essentially the same as that of the Stokes system (\ref{stokes}) with prescribed $\mathbf{V}$ and $\omega$.
We need thus one solution of the Stokes system per time step.
This approach was successfully used, for example, in \cite{LoRo07} in conjunction with a fictitious domain discretization of the Stokes system as in \cite{Glo00}.
However, independently from the discretization in space, one would encounter problems when the particle approaches the wall.  Indeed, in this lubrication regime the force
$\mathbf{F}$ explodes and thus the scheme (\ref{s1})--(\ref{s2}) is no longer valid unless an extremely low value for the time step is used that makes the simulation
prohibitively expensive. A commonly used cure for this problem is to introduce short-range repulsion forces between the particle and the wall, as in \cite{Glo00}, for example.
However, the influence of these (not necessarily realistic) forces on the accuracy of a simulation is not well understood. Another simple idea is just to stop
the particle when it tries to penetrate the wall during a numerical simulation. However, it is then not necessarily clear what criterion should be chosen to decide if the
particle should eventually bounce off the wall and when should it happen.  These questions have a partial answer in the articles \cite{Maury2007, Lefebvre09} on
the gluey particle model. It is shown there that the particle trajectory satisfies an integro-differential equation in the limit of vanishing viscosity, which is easy to
discretize in time using moderate time steps and which predicts the moment of an eventual rebound from the wall. We pursue a similar idea in this article but our aim is
to construct an approximated trajectory of the particle in the lubrication regime that would be accurate enough for any given value of the viscosity, not necessarily
small.

\section{A model ordinary differential equation with lubrication forces}

\subsection{The model}

Let us consider the simplest setting of the problem described in the previous section: assume $\Omega\subset\mathbb{R}^2$ is the half-plane $\{(x,y),\ y>0\}$
and the particle is a { {disk}} of radius $R$. Let moreover $\mathbf{g}(t)=g(t)\mathbf{e}_2$ and { {assume}} the particle is at rest at the initial time.  
{ {The $x$-component of the particle velocity and its angular velocity will then vanish at all time.}} 
The position of the particle is fully determined by its distance $q$ from the bottom, as in Figure
\ref{fig_Stokes}.  The net force $\mathbf{F}$ is the sum of the drag, which is a function of $q$ and $\mathbf{V}$, linear in ${ {\mathbf{V}}}$, and of the Archimedes
force:
$$
{ {\mathbf{F}=-n(q)\mathbf{V} + m_ag(t)\mathbf{e}_2,  \text{ with $m_a = m - \rho_f |\mathcal B_t|,$}}}
$$
where $n(q)$ is the drag coefficient computed by the Stokes equations (\ref{stokes}). Denoting the $y$-component of the velocity by $v$, we are thus led to the
following differential equations
\begin{equation}\label{odem}
    \begin{cases}
    { {m\dot{v} = -n(q)v + m_ag}}, \\
    \dot{q} = v. \\
    \end{cases}
\end{equation}
We are especially interested in the lubrication regime of small $q$. The asymptotic of $n(q)$ when $q\to 0$
is given  by $3\sqrt{2}\pi \nu \left( \frac{R}{q}\right)^{\frac{3}{2}}$ (see Appendix A) in the 2D case. 
After eliminating $v$ from the system \eqref{odem} and going to non-dimensional variables (see Appendix B for the details) we obtain the following equation for $q(t)$
\begin{equation}\label{ode2D}
\ddot{q}=-\varepsilon \frac{\dot{q}}{q^{\frac 32}} + g,
\end{equation}
with $\varepsilon=\frac{3\nu}{\rho_sR^{\frac 32}}\sqrt{  \frac{2\rho_s}{g_{char}(\rho_s-\rho_f)}  }$
where $\rho_s$ is the density of the solid { {disk}} and $g_{char}$ is the characteristic value of $g$.

In the same way, we can consider the analogous three { {dimensional}} problem setting  $\Omega$ to be the half-space $\{(x,y,z),\ z>0\}$
and the particle to be a ball  of radius $R$. The asymptotic expression is well known in this case (cf. Remark \ref{rem3D}) and is given by $6\pi V \nu \frac{R^2}{q}$. 
Performing the same non-dimensionalizations as in the 2D case, we arrive at the equation for $q(t)$ (the distance from the particle to the wall): 
\begin{equation}\label{ode3D}
\ddot{q}=-\varepsilon \frac{\dot{q}}{q} + g,
\end{equation}
with $\varepsilon=\frac{9\nu}{2\rho_sR^{\frac 32}}\sqrt{  \frac{\rho_s}{g_{char}(\rho_s-\rho_f)}  }$.

\begin{table}[h]
\begin{center}
$
\begin{array}{|c|c|c|c|c|}
\hline
\text{liquid} & \rho _{s}/\rho _{f} & R\text{ (mm)} & \varepsilon\text{ in }\eqref{ode2D} & \varepsilon\text{ in }\eqref{ode3D}  \\
\hline\hline
\text{water} & 1.1 & 1 & 0.13 & 0.14\\
& 1.5 & 0.1 & 1.55 & 1.64 \\
\hline\hline
\text{glycerin} & 1.1 & 1 & 152 & 161\\
& 1.5 & 0.1 & 1843 & 1954 \\
\hline
\end{array}%
$
\end{center}
\caption{Typical values of the parameter $\varepsilon$ in \eqref{ode2D} or  in \eqref{ode3D} taking the viscosity and density of either water or glycerin 
and different value of the particle radius $R$ and density $\rho_s$.}
\label{tabeps}
\end{table}

We remind that equations of the type \eqref{ode2D}--\eqref{ode3D} are at the basis of the gluey particle model of \cite{Maury2007, Lefebvre09}. The model consists in fact in considering 
the limit $\varepsilon\to 0$, which is physically the limit of vanishing viscosity. We see, however, that $\varepsilon$ is not necessarily small.

\subsection{Estimates and an approximated solution for the model ODE}

Consider the ordinary differential equation
\begin{equation}
\ddot{q}=-n(q)\dot{q}+g  \label{ode}
\end{equation}%
or, equivalently, the system of first-order equations
\begin{equation}\label{syst}
\begin{cases}
\dot v & = -n(q)v+g \\
\dot q & = v
\end{cases}
\end{equation}
where $n(q)$ is a given differentiable decreasing positive function on $(0,\infty )$ with $%
n(q)=N^{\prime }(q)$ such that $N(q)\rightarrow -\infty $ as $q\rightarrow 0.$ Note that this is valid for $n(q)=\varepsilon/q^{\frac 32}$ or $n(q)=\varepsilon/q$, which give
the asymptotic of the lubrication force in 2D and 3D respectively.
We assume $g \in L^1_{loc}(\mathbb R ^+)$ from now on. Under this hypothesis, Problem (\ref{ode}), completed with appropriate initial conditions, is well-posed, as proved for instance in \cite[Prop. 1.1]{Maury2007} 
for the case $n(q)=\varepsilon/q$. The proof is generalized without problem to any $n(q)$ satisfying the above hypotheses, as mentioned in \cite[Section 5.1]{Maury2007}. 

\begin{proposition}
Given $(q_0,v_0) \in \mathbb{R}^2$ with $q_0>0$ there exists a unique positive global solution $q\in W^{1,\infty}_{loc}(\mathbb R ^+)$
to \eqref{ode} with initial conditions:
\begin{equation} \label{CI}
q(0) = q_0 \quad \dot{q}(0) = v_0.
\end{equation}
\end{proposition}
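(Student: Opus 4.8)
The plan is to eliminate the second-order structure by exploiting the fact that the damping term is in divergence form, thereby reducing \eqref{ode} to a single scalar first-order equation for which classical theory applies, and then to establish global existence through \emph{a priori} bounds keeping $q$ in a compact subset of $(0,\infty)$. The key observation is that, letting $N$ be a primitive of $n$ (well defined, $C^1$ and strictly increasing on $(0,\infty)$, with $N(q)\to-\infty$ as $q\to 0$ by hypothesis), equation \eqref{ode} is equivalent to
\begin{equation*}
\frac{d}{dt}\bigl(\dot q + N(q)\bigr) = g ,
\end{equation*}
so that any solution must satisfy $\dot q(t) + N(q(t)) = w(t)$, where $w(t):=v_0+N(q_0)+\int_0^t g(s)\,ds$ is absolutely continuous and determined by the data alone. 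Conversely, a positive solution of the scalar Cauchy problem $\dot q = w(t)-N(q)$, $q(0)=q_0$, produces a solution of \eqref{ode}--\eqref{CI} by setting $v=\dot q$, since then $\dot q(0)=w(0)-N(q_0)=v_0$. (The equivalence of the two formulations for $W^{1,\infty}_{\mathrm{loc}}$ functions with values in a compact subset of $(0,\infty)$ is routine via the chain rule for Lipschitz functions.)

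Next I would invoke the Cauchy--Lipschitz/Carath\'eodory theorem for the reduced problem $\dot q = w(t)-N(q)$. Since $w$ is continuous and $N$ is $C^1$ on $(0,\infty)$, the right-hand side is continuous in $t$ and locally Lipschitz in $q$, uniformly on compact time intervals (the local Lipschitz constant being a supremum of $n$, which does not depend on $t$). Hence there is a unique maximal solution defined on a maximal interval $[0,T^\ast)$ with values in $(0,\infty)$, and by the equivalence above this yields the unique maximal solution of \eqref{ode}--\eqref{CI}. It remains only to prove $T^\ast=+\infty$.

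The crucial point — and the step where the lubrication structure $N(q)\to-\infty$ is genuinely used — is the lower bound that prevents $q$ from reaching $0$ in finite time; the upper bound is comparatively soft. Fix an arbitrary finite $T$ and set $M:=\sup_{[0,T]}|w|<\infty$. For the upper bound, whenever $q(t)\ge q_0$ one has $N(q(t))\ge N(q_0)$, hence $\dot q(t)\le M-N(q_0)$; a standard comparison argument then gives $q(t)\le q_0+(M+|N(q_0)|)\,t$ on $[0,T\wedge T^\ast)$, so $q$ cannot blow up. For the lower bound, choose $\delta\in(0,q_0)$ small enough that $N(\delta)<-M$, which is possible since $N(q)\to-\infty$. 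If $q$ ever dropped below $\delta$, let $t_1$ be the last time before that at which $q(t_1)=\delta$ (such a time exists by continuity, as $q(0)=q_0>\delta$); on the ensuing interval $q(t)<\delta$ forces $N(q(t))\le N(\delta)<-M$, whence $\dot q(t)=w(t)-N(q(t))\ge -M-N(\delta)>0$, so $q$ is strictly increasing there — contradicting $q(t)<\delta=q(t_1)$. Therefore $q(t)\ge\delta$ on $[0,T\wedge T^\ast)$. Thus $q$ remains in the compact set $[\delta,\,q_0+(M+|N(q_0)|)T]\subset(0,\infty)$, so the maximal solution extends beyond $T$; since $T$ is arbitrary, $T^\ast=+\infty$. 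Finally, on each $[0,T]$ these bounds make $N(q)$ and $w$ bounded, so $\dot q=w-N(q)\in L^\infty(0,T)$, giving $q\in W^{1,\infty}_{\mathrm{loc}}(\mathbb R^+)$ (and, from \eqref{ode}, $\ddot q\in L^1_{\mathrm{loc}}$), which completes the proof.
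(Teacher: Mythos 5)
The paper does not actually give a proof of this proposition: it cites \cite[Prop.~1.1]{Maury2007} for the case $n(q)=\varepsilon/q$ and remarks that the argument extends to general $n$ with $N(q)\to-\infty$. Your proof is a correct, self-contained version of exactly this argument, and its central device --- integrating once to obtain $\dot q + N(q) = w(t)$ with $w(t)=v_0+N(q_0)+\int_0^t g$ --- is precisely the identity the paper itself relies on in the sequel (cf.\ equation \eqref{odeii} and the proofs of Propositions \ref{fact1}--\ref{fact2}), so your reduction sits naturally within the paper's own framework rather than introducing a foreign tool.

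On the substance: the reduction to the scalar first-order Cauchy problem $\dot q = w(t)-N(q)$, $q(0)=q_0$, is valid ($w$ is absolutely continuous since $g\in L^1_{\mathrm{loc}}$, and the chain rule for $W^{1,\infty}_{\mathrm{loc}}$ functions with values in a compact subset of $(0,\infty)$ justifies the equivalence, as you note); local existence and uniqueness follow because $N$ is $C^1$ on $(0,\infty)$, hence the right-hand side is locally Lipschitz in $q$ uniformly on compact time intervals; the lower bound is the genuinely lubrication-specific step and is argued correctly (pick $\delta$ with $N(\delta)<-M$ and use that $\dot q>0$ below $\delta$, exploiting monotonicity of $N$ and $N(q)\to-\infty$); the upper bound is a matching ``last-crossing-time'' argument on $\{q>q_0\}$, which you state tersely but which is sound; and the $W^{1,\infty}_{\mathrm{loc}}$ regularity then falls out of $\dot q = w - N(q)$ with $q$ confined to a compact subset of $(0,\infty)$ on each $[0,T]$. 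No gaps.
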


Let us take some threshold value $q_{s}$.
We suppose that $q$ goes below this value after some time $t_1$
when $q(t)$ hits the threshold $q_{s}$ for the first time. Our aim is to find a suitable approximation
of $q$ after $t_1$ which enables to predict the time $t_2$ when $q$ goes above the threshold $q_s$
without solving \eqref{ode}. To this end, we introduce
\begin{equation}\label{vbar}
\bar{v}(t)=\dot{q}(t_{1})+\int_{t_{1}}^{t}g(s)ds
\end{equation}%
for $t\ge t_1$ and note that $\bar{v}(t_2)=\dot{q}(t_2)$. Indeed, integrating (\ref{ode}) from $t_1$ to $t>t_1$ shows that
\begin{equation}\label{odeii}
\dot{q}(t)=\dot{q}(t_1)+N(q_s)-N(q(t))+\int_{t_1}^{t}g(s)ds=N(q_s)-N(q(t))+\bar{v}(t).
\end{equation}%
Setting here $t=t_2$ and noting that $q(t_2)=q_s$ gives the desired result. Our approximation of the trajectory $q(t)$, denoted by $\bar{q} (t)$, stems from the assumption (verified afterwards in Proposition \ref{fact3}) that the velocity $\dot{q}(t_2)$ at the return point $t_2$ is small provided the threshold $q_s$ is small. Consequently, a good approximation for the time $t_2$ should be provided by the time $\bar{t}_{2}$ defined as the first time larger than $t_1$ when $\bar{v}(t)=0$. The construction of the approximated trajectory is hence the following: we first assume that $\bar{q}(t)$ is the same as $q(t)$ until the latter hits $q_s$ for the first time at $t=t_1$. Next, the trajectory $\bar{q}(t)$ is frozen until the time $t=\bar{t}_2$ and resumes then again as a solution to \eqref{ode} starting from $q_s$ with zero velocity:
\begin{equation}\label{qbar}
\bar{q}(t)=\left\{
\begin{array}{l}
q(t)\text{, for }0<t<t_{1} \\
q_{s}\text{, for }t_{1}\leq t<\bar{t}_{2}\\
\text{solution to (\ref{ode}) with }q(\bar{t}_{2})=q_{s}\text{, }\dot{q}(\bar{t}_{2})=0,\text{ for }t\geq \bar{t}_{2}
\end{array}
\right.
\end{equation}
(see Figure \ref{fig_constr_qbar}). 

\begin{figure}[!h]
     \hspace*{3cm}
 \psfrag{qbar}{ {$\overline{q}$}}
 \psfrag{q}{ $q$}
 \psfrag{qs}{ $q_s$}
 \psfrag{qtil}{ {$\tilde q$}}
 \psfrag{t1}{ $t_1$}
 \psfrag{t2}{ $t_2$}
 \psfrag{t2b}{ {$\overline{t}_2$}}
 \psfrag{t2t}{{$\widetilde{t}_2$}}
 
 \centerline{
  \includegraphics[scale=0.7,angle=-90]{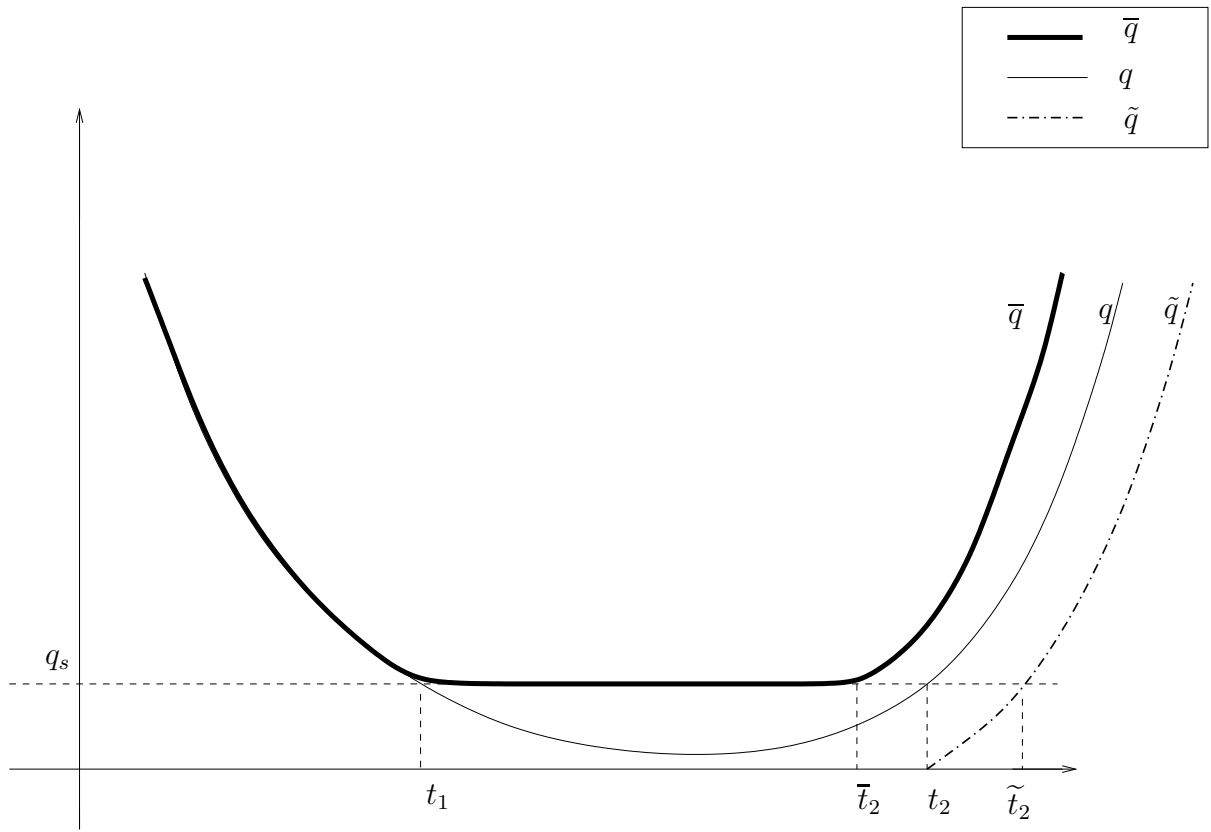}
  }
     \caption{Construction of the approximated trajectories $\bar{q}(t)$ and $\tilde{q}(t)$.}
     \label{fig_constr_qbar}
\end{figure}

In order to study the error committed by introducing the approximated trajectory $\bar{q}(t)$, we start by inferring the following estimate:
\begin{proposition}\label{fact1}
The time $\bar{t}_2$ provides a lower bound for $t_2$, { {\it \textit{i.e.}}} $\bar{t}_2\le t_2$.
\end{proposition}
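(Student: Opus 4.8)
The plan is to argue directly (no contradiction is really needed), using the identity \eqref{odeii} on $(t_1,t_2)$ together with the strict monotonicity of $N$. By construction $q$ stays strictly below the threshold on $(t_1,t_2)$, with $q(t_2)=q_s$; since $N'=n>0$, $N$ is strictly increasing and so $N(q_s)-N(q(t))>0$ for $t\in(t_1,t_2)$. Plugging this into \eqref{odeii} gives the pointwise bound
\begin{equation*}
\dot q(t)=N(q_s)-N(q(t))+\bar v(t)>\bar v(t),\qquad t\in(t_1,t_2),
\end{equation*}
while at $t=t_2$, where $q(t_2)=q_s$, the identity \eqref{odeii} reduces to $\bar v(t_2)=\dot q(t_2)$, as already observed right after \eqref{vbar}.

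Next I would extract two sign facts about $\bar v$. First, since $q<q_s$ on $(t_1,t_2)$ and $q(t_2)=q_s$, the difference quotient $(q(t_2)-q(t))/(t_2-t)$ is positive for $t<t_2$, whence $\dot q(t_2)\ge 0$ and therefore $\bar v(t_2)=\dot q(t_2)\ge 0$. Second, $\bar v$ cannot remain nonnegative throughout $(t_1,t_2)$: if $\bar v\ge 0$ there, the displayed inequality would force $\dot q>0$ on $(t_1,t_2)$, hence $q$ strictly increasing on $[t_1,t_2]$ and $q(t_2)>q(t_1)=q_s$, contradicting $q(t_2)=q_s$. So $\bar v(\tau)<0$ for some $\tau\in(t_1,t_2)$.

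Since $\bar v$ is continuous (being the integral \eqref{vbar} of an $L^1_{loc}$ function plus a constant), the facts $\bar v(\tau)<0$ and $\bar v(t_2)\ge 0$ together with the intermediate value theorem produce a zero of $\bar v$ in $(\tau,t_2]\subset(t_1,t_2]$. As $\bar t_2$ is by definition the \emph{first} zero of $\bar v$ larger than $t_1$, it follows that $\bar t_2\le t_2$. I do not anticipate any real difficulty here: the only mildly delicate points are that $q<q_s$ holds on the \emph{open} interval $(t_1,t_2)$ (immediate from the definitions of $t_1,t_2$ and continuity of $q$) and that $\dot q$ is continuous, so that $\dot q(t_2)$ makes sense pointwise (from the equation one gets $\dot q\in W^{1,1}_{loc}$); granting \eqref{odeii}, the rest is elementary.
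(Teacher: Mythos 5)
Your argument is essentially the paper's: both use identity \eqref{odeii} to show $\bar v(t_2)=\dot q(t_2)\ge 0$, locate a point in $(t_1,t_2)$ where $\bar v\le 0$, and conclude by the intermediate value theorem (the paper evaluates $\bar v$ at a local minimum of $q$, whereas you argue by contradiction that $\bar v\ge 0$ throughout would force $q$ to increase past $q_s$ — the same fact seen from the other side). The only thing the paper adds that you omit is the explicit dispatch of the trivial edge cases $t_2=\infty$ and $t_2=t_1$, which your argument silently presupposes away.
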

\begin{proof}
{ {There are three cases. If $t_2$ does not exist, there is nothing to prove. If $t_2 = t_1$ this means $t_1$ is a local minimum for $q$ so
that $\dot{q}(t_1) = 0$ and $t_1 = \overline{t}_2 = t_2.$ Finally, if $t_2$ exists and satisfies $t_1 < t_2$,}}
since $q(t_1)=q(t_2)=q_s$ the function $q(t)$ has a local minimum $t_{\min}\in(t_1,t_2)$ where $\dot{q}(t_{\min})=0$. Evaluating (\ref{odeii}) at $t=t_{\min}$ shows then
$\bar{v}(t_{\min})=N(q(t_{\min}))-N(q_s)\le 0$. On the other hand, $\bar{v}(t_2)=\dot{q}(t_2)\ge 0$ since the function $q(t)$ is increasing at $t_2$ by the definition of $t_2$.
The intermediate value theorem tells now that $\exists\bar{t}\in[t_{\min},t_2]$ such that $\bar{v}(\bar{t})=0$. Thus the first time $t=\bar{t}_2$ when $\bar{v}(t)=0$
is certainly in $(t_1,\bar{t}]\subset (t_1,t_2]$.\\
\end{proof}

We have proved, in particular, that if the true trajectory $q(t)$ ever returns to the values above the threshold $q_s$, {\it \textit{i.e.}} $t_2<\infty$, the indicator $\bar{t}_2$ will show it, {\it \textit{i.e.}} $\bar{t}_2<\infty$. In Proposition \ref{fact2}, we compute an upper bound $\tilde{t}_2$ for $t_2$. 
In particular, under suitable assumptions on $g$ the time $t_2$ exists.

To see moreover that $\bar{t}_2$ may be a good approximation to $t_2$ we need, as mentioned before, to control $\dot{q}(t_2).$
The following proposition proves that $\dot{q}(t_2)$  is not too large with respect to $q_s.$
\begin{proposition}\label{fact3}
Let $\dot{q}^+$ (resp. $g^+$) be the positive part of $\dot{q}$ (resp. $g$): { {$\dot{q}^+=\max(\dot{q},0)$ (resp. $g^+=\max(g,0)$)}}. Then
\begin{equation}\label{est3}
\|\dot{q}^+\| \leq \frac {\|g^+\|}{n(q_s)},
\end{equation}
where $\|\cdot\|$ denotes the supremum on $[t_1,t_2]$. In particular, $\dot{q}(t_2)\ge 0$, so that $\dot{q}(t_2) \leq \frac {\|g^+\|}{n(q_s)} $.
\end{proposition}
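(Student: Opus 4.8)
The plan is to localize everything on $[t_1,t_2]$ and exploit the single structural fact available there: since $t_1$ is the first time $q$ reaches $q_s$ from above and $t_2$ the first return, one has $q(t)\le q_s$ on all of $[t_1,t_2]$, hence, $n$ being decreasing and positive, $n(q(t))\ge n(q_s)>0$ throughout. I would rewrite \eqref{ode} as the linear ODE $\dot v+n(q)v=g$ for $v=\dot q$ (the first line of \eqref{syst}) and multiply by the integrating factor
\[
\mu(t)=\exp\left(\int_{t_1}^{t}n(q(s))\,ds\right),\qquad t\in[t_1,t_2],
\]
which satisfies $\mu(t_1)=1$, $\mu\ge 1$, $\dot\mu=n(q)\mu$, and is Lipschitz on $[t_1,t_2]$ because $q$ is continuous and stays positive there, so $n(q)$ is bounded. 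The only other ingredient is the sign of the velocity when the threshold is first crossed: as $q$ decreases below $q_s$ right after $t_1$, one has $v(t_1)=\dot q(t_1)\le 0$.

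Then $\frac{d}{dt}(\mu v)=\mu g$, and integrating from $t_1$ to an arbitrary $t\in[t_1,t_2]$ gives
\[
\mu(t)v(t)=v(t_1)+\int_{t_1}^{t}\mu(s)g(s)\,ds\;\le\;\int_{t_1}^{t}\mu(s)g^+(s)\,ds\;\le\;\|g^+\|\int_{t_1}^{t}\mu(s)\,ds,
\]
where I have used $v(t_1)\le 0$, $\mu\ge 0$ and $g\le g^+$. The key step is to reabsorb the remaining integral: since $n(q(s))\ge n(q_s)$ on $[t_1,t_2]$,
\[
\int_{t_1}^{t}\mu(s)\,ds\;\le\;\frac{1}{n(q_s)}\int_{t_1}^{t}n(q(s))\mu(s)\,ds\;=\;\frac{\mu(t)-1}{n(q_s)}\;\le\;\frac{\mu(t)}{n(q_s)}.
\]
Combining the two displays and dividing by $\mu(t)>0$ yields $v(t)\le\|g^+\|/n(q_s)$ for every $t\in[t_1,t_2]$; since the right-hand side is nonnegative this is exactly $\dot q^+(t)\le\|g^+\|/n(q_s)$, and taking the supremum over $[t_1,t_2]$ gives \eqref{est3}. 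Finally, $q(t)\le q_s=q(t_2)$ on $[t_1,t_2]$ forces $\dot q(t_2)\ge 0$, hence $\dot q(t_2)=\dot q^+(t_2)\le\|\dot q^+\|\le\|g^+\|/n(q_s)$.

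The estimate itself is short; what needs a word of care is regularity and degenerate cases. One should note that $v=\dot q$ is absolutely continuous — indeed $\dot v=-n(q)v+g\in L^1_{loc}$ by the well-posedness result quoted above, while $q\in W^{1,\infty}_{loc}$ — so that, $\mu$ being Lipschitz, the identity $\frac{d}{dt}(\mu v)=\mu g$ holds a.e.\ and may legitimately be integrated. The case $t_2=t_1$ is trivial (the supremum is over a point, and $\dot q(t_1)\le 0$), and if $t_2=+\infty$ one simply runs the argument on $[t_1,T]$ for every finite $T<t_2$ and lets $T\uparrow t_2$. I do not anticipate any real obstacle beyond this bookkeeping.
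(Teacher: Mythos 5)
Your proof is correct, but it takes a genuinely different route from the paper's. The paper argues locally and by contradiction: on a maximal subinterval $[t_{\min},t_{\max}]\subset[t_1,t_2]$ where $\dot q\ge 0$ (so $\dot q(t_{\min})=0$), it introduces the comparison function $f(t)=\dot q(t)-\|g^+\|/n(q(t))$ and shows that $f>0$ at any interior point $t_0$ would force $\ddot q<0$ there by \eqref{ode}; combined with $q$ increasing and $n$ decreasing this makes $f$ strictly decreasing near $t_0$, so $f$ would stay positive all the way back to $t_{\min}$, contradicting $f(t_{\min})<0$. This yields the pointwise bound $\dot q(t)\le \|g^+\|/n(q(t))$, from which \eqref{est3} follows using $n(q(t))\ge n(q_s)$. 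You instead solve the linear equation $\dot v+n(q)v=g$ directly by an integrating factor and absorb $\int_{t_1}^t\mu$ into $(\mu(t)-1)/n(q_s)$ via the same monotonicity of $n$. Your argument is more global --- no case-splitting over sign-change intervals of $\dot q$ --- and proceeds from an explicit representation of $v$; the paper's produces along the way the slightly sharper intermediate pointwise estimate in terms of $n(q(t))$ rather than $n(q_s)$. Both are elementary and equally short; your closing regularity remarks (absolute continuity of $v$, Lipschitz $\mu$ on compacts) correctly address the only delicate point in integrating the identity $\frac{d}{dt}(\mu v)=\mu g$ a.e.
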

\begin{proof}
Let $\dot{q}$ be non-negative on an interval $[t_{\min},t_{\max}]\subset[t_1,t_2]$. Without loss of generality, we can thus assume that $t_{\min}$ is
a local minimum of $q(t)$ and $t_{\max}$ is either a local maximum or $t_{\max}=t_2$.
Then, we introduce
$$
f(t) := \dot{q}(t) - \dfrac{\|g^+\|}{n(q(t))}, \quad \forall \, t \in [t_1,t_2].
$$
If $f(t_0)>0$  at some $t_0\in(t_{\min},t_{\max})$ then $f(t)$ rests positive on a small interval around $t_0$ 
so that $\ddot{q}< 0$ by invoking (\ref{ode}) and thus $\dot{q}(t)$ is decreasing on this interval. 
Since $q(t)$ is increasing on $[t_{\min},t_{\max}]$ and $n(q)$ is decreasing, we obtain that
$f(t)$ is decreasing locally around $t_0$ so that $f(t)\ge f(t_0)>0$ for all $t\in[t_0-\delta,t_0]$ with some $\delta > 0$.  
Repeating this argument for times down to $t_{\min}$ we see that $f(t)$ should be positive on $[t_{\min},t_0]$ but this is
impossible since $\dot{q}(t_{\min}) = 0$ and $f(t_{\min}) < 0$. This proves that $f(t) \le 0$ on $[t_{\min},t_{\max}]$.
\end{proof}

The results above allow us to fully characterize the error of
approximating $q(t)$ by $\bar{q}(t)$ up to time $\bar{t}_2$ and to
bound it by certain quantities depending on $\bar{q}$ alone for
time after $\bar{t}_2$, at least in the special case when $g(t)$
is given by
\begin{equation}\label{gspec}
 g(t)=
    \begin{cases}
    g_-(t)< 0,\ &\textrm{for}\ t\le t_0,\\
    g_+>0,\ &\textrm{for}\ t> t_0,\\
    \end{cases}
\end{equation}
with some positive constants $t_0$, $g_+$ and a negative function $g_-(t)$. Indeed, we prove in the following proposition 
that the error $\tau=t_2-\bar{t}_2$ is bounded by $1/n(q_s)$ and we remind that $n(q_s)\to\infty$ as $q_s\to 0$.
\begin{proposition}\label{cor}
If $g(t)$ is given by \eqref{gspec}, then
\begin{align}
&0\le t_2-\bar{t}_2 \le \frac{1}{n(q_s)},\notag\\
&\sup_{t\in[0,\bar{t}_2]}|q(t) - \bar{q}(t)| \le q_s, \label{eqcor}\\
&|\dot{q}(\bar{t}_2) - \dot{\bar{q}}(\bar{t}_2)| = \dot{q}(\bar{t}_2) \le \frac{g_+}{n(q_s)}. \notag
\end{align}
Moreover, both $q(t)$ and $\bar{q}(t)$ are non-decreasing for
$t\ge\bar{t}_2$ and
\begin{equation}\label{eqcor2}
 |q(t)-\bar{q}(t)|\le
    \begin{cases}
    \bar{q}(t),\ &\textrm{if}\ \bar{t}_2\le t < t_2,\\
    \bar{q}(t)-\bar{q}(t-\tau),\ &\textrm{if}\ t \ge t_2,\\
    \end{cases}
\end{equation}
with $\tau=t_2-\bar{t}_2$.
\end{proposition}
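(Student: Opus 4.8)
My plan is to exploit the special form of $g$ in \eqref{gspec} to reduce everything, for $t\ge\bar t_2$, to the autonomous equation $\ddot Y=-n(Y)\dot Y+g_+$, and then to argue mostly in the phase plane (velocity as a function of position). I would start with three preliminary facts. (i) $g\equiv g_+$ on $[\bar t_2,\infty)$: since $t_1$ is the first time $q$ reaches $q_s$ we have $\dot q(t_1)\le 0$, and on $[t_1,t_0]$ one has $g=g_-<0$, so $\bar v$ is negative on $(t_1,t_0]$ and its first zero $\bar t_2$ lies past $t_0$. (ii) $t_2<\infty$ (hence $\bar t_2\le t_2<\infty$ by Proposition~\ref{fact1}): were $q\le q_s$ forever, \eqref{odeii} together with $N(q)\le N(q_s)$ would give $\dot q(t)\ge\bar v(t)\to+\infty$, absurd. (iii) Integrating $g\equiv g_+$ from $\bar t_2$ to $t_2$ gives $\dot q(t_2)=\bar v(t_2)=g_+(t_2-\bar t_2)=g_+\tau$; with $\dot q(t_2)\le g_+/n(q_s)$ from Proposition~\ref{fact3} this yields $t_2-\bar t_2\le 1/n(q_s)$, and with $\dot{\bar q}(\bar t_2)=0$ and $\dot q(\bar t_2)=N(q_s)-N(q(\bar t_2))\ge 0$ (again \eqref{odeii}) it yields the bound on $|\dot q(\bar t_2)-\dot{\bar q}(\bar t_2)|=\dot q(\bar t_2)$. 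Finally $\sup_{[0,\bar t_2]}|q-\bar q|\le q_s$ is trivial: $\bar q=q$ on $[0,t_1]$ and $0<q\le q_s=\bar q$ on $[t_1,\bar t_2]\subset[t_1,t_2]$.

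For monotonicity on $[\bar t_2,\infty)$ I would view $\ddot Y=-n(Y)\dot Y+g_+$ as a linear equation for $\dot Y$ with integrating factor $t\mapsto\exp\big(\int_{\bar t_2}^t n(Y)\big)$, obtaining $\dot Y(t)=e^{-\int_{\bar t_2}^t n(Y)}\big(\dot Y(\bar t_2)+\int_{\bar t_2}^t e^{\int_{\bar t_2}^s n(Y)}g_+\,ds\big)>0$ for $t>\bar t_2$, since $\dot q(\bar t_2)\ge 0$, $\dot{\bar q}(\bar t_2)=0$ and $g_+>0$. So $q$ and $\bar q$ are non-decreasing on $[\bar t_2,\infty)$ and strictly increasing afterwards; in particular $\bar q\ge q_s$ there, and (one also checks $q,\bar q\to\infty$) each maps its interval increasingly and bijectively onto $[q_s,\infty)$.

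It remains to prove \eqref{eqcor2}. On $[\bar t_2,t_2)$ we have $0<q(t)\le q_s\le\bar q(t)$, so $|q(t)-\bar q(t)|=\bar q(t)-q(t)\le\bar q(t)$, which is the first case. For $t\ge t_2$ it suffices to establish the sandwich $\bar q(t-\tau)\le q(t)\le\bar q(t)$, and here I would pass to the phase plane: let $\Psi(x)$, resp. $\Phi(x)$, be the squared velocity of $q$, resp. $\bar q$, at the unique time it occupies position $x\ge q_s$. Differentiating along the trajectory, both solve the scalar ODE $y'=2g_+-2n(x)\sqrt y$, with $\Phi(q_s)=0$ and, crucially, $\Psi(q_s)=\dot q(t_2)^2=(g_+\tau)^2$; as the right-hand side is locally Lipschitz in $y>0$, solutions cannot cross, so $\Phi\le\Psi$ on $[q_s,\infty)$. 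Now set $\phi:=q^{-1}\circ\bar q:[\bar t_2,\infty)\to[t_2,\infty)$, so $q(t)=\bar q(\phi^{-1}(t))$, $\phi(\bar t_2)=t_2$, and $\phi'(s)=\dot{\bar q}(s)/\dot q(\phi(s))=\sqrt{\Phi(\bar q(s))/\Psi(\bar q(s))}\in[0,1]$. From $\phi'\le 1$ we get $\phi(s)\le s+\tau$, hence $\phi^{-1}(t)\ge t-\tau$ and $q(t)\ge\bar q(t-\tau)$. For the reverse inequality I would use the telescoping identity $\tfrac{1}{\sqrt\Phi}-\tfrac{1}{\sqrt\Psi}=\tfrac{1}{g_+}\tfrac{d}{dx}\big(\sqrt\Phi-\sqrt\Psi\big)$, which is immediate from $\tfrac{d}{dx}\sqrt\Phi=\tfrac{g_+}{\sqrt\Phi}-n(x)$ and likewise for $\Psi$: integrating from $q_s$ to $\bar q(s)$ (using $\sqrt{\Phi(q_s)}=0$, $\sqrt{\Psi(q_s)}=g_+\tau$) and changing variables $x=\bar q(u)$ in $\int_{\bar t_2}^s(1-\phi'(u))\,du$ gives $\phi(s)-s=\tfrac{1}{g_+}\big(\sqrt{\Psi(\bar q(s))}-\sqrt{\Phi(\bar q(s))}\big)\ge 0$, so $\phi^{-1}(t)\le t$ and $q(t)\le\bar q(t)$. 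Combining, $|q(t)-\bar q(t)|=\bar q(t)-q(t)\le\bar q(t)-\bar q(t-\tau)$.

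The routine parts are the first case of \eqref{eqcor2}, the monotonicity, and the lower bound $q(t)\ge\bar q(t-\tau)$ (a soft comparison). The main obstacle is the upper bound $q\le\bar q$ on $[t_2,\infty)$: a naive second-order comparison does not apply, because at $t_2$ the true trajectory lies below $\bar q$ but moves up strictly faster ($\dot q(t_2)=g_+\tau>\dot{\bar q}(t_2)$), so one really needs the phase-plane reformulation together with the exact identity $\dot q(t_2)=g_+\tau$ — this is the balance that makes a velocity head-start $g_+\tau$ worth exactly a time head-start $\tau$. Two minor technical points: $\sqrt\Phi$ has infinite slope at $x=q_s$ (where $\Phi=0$), so one must note it is still absolutely continuous there — indeed $\Phi(x)\sim 2g_+(x-q_s)$ — before invoking the fundamental theorem of calculus and the change of variables; and the degenerate case $\tau=0$ should be handled separately (then $\bar t_2=t_2$, $\dot q(t_2)=0$, and $q\equiv\bar q$ on $[t_2,\infty)$ by uniqueness).
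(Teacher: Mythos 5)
Your argument is correct and, apart from the preliminary bookkeeping (identical in substance to the paper's: $\bar t_2\ge t_0$, the identity $\dot q(t_2)=g_+\tau$, the bound on $\tau$ via Proposition~\ref{fact3}, the trivial estimate on $[0,\bar t_2]$, and the monotonicity for $t\ge\bar t_2$), it takes a genuinely different route to the sandwich $\bar q(t-\tau)\le q(t)\le\bar q(t)$. The paper stays in the time variable: integrating \eqref{ode} once yields the first-integral identities $\dot q(t)=N(q_s)-N(q(t))+\int_{\bar t_2}^t g$ and $\dot{\bar q}(t)=N(q_s)-N(\bar q(t))+\int_{\bar t_2}^t g$, so that $q(t')=\bar q(t')$ at some $t'$ would force $\dot q(t')=\dot{\bar q}(t')$ and hence, by uniqueness, coincidence of the two trajectories from $t'$ on; since $q(\bar t_2)\le q_s=\bar q(\bar t_2)$, no crossing is possible and $q\le\bar q$. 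The lower bound comes from the analogous comparison of $q$ with $\bar q(\cdot-\tau)$, which solves the same second-order problem from $t_2$ with the same position but zero velocity. You instead reparametrize by position: both squared velocities $\Phi$, $\Psi$ solve the scalar phase-plane ODE $y'=2g_+-2n(x)\sqrt{y}$, the comparison $\Phi\le\Psi$ is converted into time-lag bounds for $\phi=q^{-1}\circ\bar q$, and your exact identity $\phi(s)-s=\bigl(\sqrt{\Psi}-\sqrt{\Phi}\bigr)(\bar q(s))/g_+$ delivers both halves of the sandwich at once, hinging on the same balance $\dot q(t_2)=g_+\tau$ that the paper uses. The paper's route is shorter and sidesteps the square-root singularity at $q_s$ and the degenerate case $\tau=0$ that you rightly flag; your phase-plane argument is heavier but equally rigorous once those points are attended to, and it has the small merit of making $t_2<\infty$ explicit, which the paper leaves implicit.
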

\begin{proof}
We first note that $t_2\ge\bar{t}_2\ge t_0$. The first inequality here is already proved in Proposition \ref{fact1}. The second inequality follows from $\bar{v}(\bar{t}_2)=0$, which
can be rewritten as
$$
-\dot{q}(t_1)=\int_{t_1}^{\bar{t}_2}g(s)ds.
$$
Since $\dot{q}(t_1)\le 0$, the last equality cannot hold if $g$ is negative everywhere on $[t_1,t_2]$. Since $\bar{t}_2\ge t_0$, we have $g(t)=g_+>0$ for $t\ge\bar{t}_2$. Evaluating \eqref{odeii} at $t=t_2$ gives
\begin{equation}\label{dqt2}
\dot{q}(t_2)=\int_{\bar{t}_2}^{t_2}g(s)ds = g_+(t_2-\bar{t}_2).
\end{equation}
Recalling Proposition \ref{fact3}, we see that
$$
t_2-\bar{t}_2 = \frac{\dot{q}(t_2)}{g_+} \le \frac{1}{n(q_s)},
$$
which is the first inequality in \eqref{eqcor}. The second inequality in \eqref{eqcor} is obvious since $0<q(\bar{t}_2)\le q_s$ and $\bar{q}(\bar{t}_2)=q_s$.
The third inequality follows from Proposition \ref{fact3} since $\dot{q}(\bar{t}_2)\ge 0$. Indeed, \eqref{odeii} evaluated at $t=\bar{t}_2$ gives $\dot{q}(\bar{t}_2)=N(q_s)-N(q(\bar{t}_2))$.

We now turn to the study of $q(t)$ and $\bar{q}(t)$ for time
$t\ge\bar{t}_2$, in order to prove that $q(t)$ and $\bar{q}(t)$ are non-decreasing on this interval. We first observe that $q(t)$ cannot have local
maximums at $t>t_0$. Indeed, at any local extremum $t_e>t_0$,
equation \eqref{ode} would imply $\ddot{q}(t_e)=g_+>0$. Since
$q(t)$ is non-decreasing at $\bar{t}_2>t_0$, it should be
non-decreasing also everywhere on $[\bar{t}_2,\infty)$. The same
reasoning applies to $\bar{q}(t)$. The estimate \eqref{eqcor2} is
now evident in the case $\bar{t}_2\le t \le t_2$ since
$\bar{q}(t)\ge q_s$ and $q(t)\le q_s$ for such $t$. 

In order to prove \eqref{eqcor2} in the case $t\ge t_2$ we will show that $q(t)$ is squeezed between $\bar{q}_\tau(t)$ and $\bar{q}(t)$ for $t\ge t_2$ where 
$\bar{q}_\tau(t)=\bar{q}(t-\tau)$. Indeed both $\bar{q}_\tau(t)$ and $q(t)$ satisfy the same equation \eqref{ode} with the same initial condition at $t=t_2$ 
($q(t_2)=\bar{q}_\tau(t_2)=q_s$) but possibly different initial velocities $\dot{q}(t_2)\ge 0$, $\dot{\bar{q}}_\tau(t_2)=0$. Writing \eqref{ode} for $\bar{q}_\tau(t)$ and $q(t)$,
taking the difference thereof and integrating from $t_2$ to $t$ yields
$$
\dot{q}(t)-\dot{\bar{q}}_\tau(t) = \dot{q}(t_2) - N(q(t)) + N(\bar{q}_\tau(t)). 
$$
This shows that if the trajectories of $q(t)$ and $\bar{q}_\tau(t)$ intersect at some time $t'$ { {then}} $\dot{q}(t')\ge\dot{\bar{q}}_\tau(t')$ so that $q(t)\ge\bar{q}_\tau(t)$ at least for some time 
after $t'$. Since $q(t)\ge\bar{q}_\tau(t)$ holds also for some time after $t_2$ it should hold for all $t\in[t_2,\infty)$. 

It remains to prove that $q(t)\le\bar{q}(t)$ for all $t\in[t_2,\infty)$. To this end, we integrate \eqref{ode} for $q(t)$ from $t_2$ to $t$ and that for $\bar{q}(t)$ from $\bar{t}_2$ to $t$.
This yields with the aid of \eqref{dqt2}
\begin{align}
&\dot{q}(t) = \dot{q}(t_2) + N(q_s) - N(q(t)) + \int_{t_2}^t g(s)ds
= N(q_s) - N(q(t)) + \int_{\bar{t}_2}^t g(s)ds, \label{dq1}\\
&\dot{\bar{q}}(t) =  N(q_s) - N(\bar{q}(t)) + \int_{\bar{t}_2}^t g(s)ds.  \label{dq2}
\end{align}
We see now that if $q(t')=\bar{q}(t')$ at some time $t'\ge\bar{t}_2$ { {then}} also $\dot{q}(t')=\dot{\bar{q}}(t')$. It means by the uniqueness of solutions to \eqref{ode} that the trajectories 
of $q(t)$ and $\bar{q}(t)$ either coincide or do not intersect on $[\bar{t}_2,\infty)$. Since $q(\bar{t}_2)\le\bar{q}(\bar{t}_2)$ this implies $q(t)\le\bar{q}(t)$ on $[\bar{t}_2,\infty)$.\\
\end{proof}

It is possible to relax the hypotheses of the last Proposition on the particular form of the function $g$ in several ways, 
if we still remain in the case when $q$ goes below the threshold value $q_s$ a finite number of times (in particular, $g$ should be allowed to change sign only a finite number of times). 
Although we do not have an analogue of these results for a general $g(t)$, we can always provide an easily computable upper bound for $t_2$ alongside the lower bound $\bar{t}_2$.

\begin{proposition}\label{fact2}
Let $\tilde{t}_{2}$ be the first time after $\bar{t}_2$ such that
\begin{equation}\label{tilde2}
\int_{\bar{t}_{2}}^{\tilde{t}_{2}}(\tilde{t}_{2}-s)g(s)ds=q_{s}.
\end{equation}
Then $\tilde{t}_{2}\ge t_2$.
\end{proposition}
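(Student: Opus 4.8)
The plan is to relate the weighted integral of $g$ appearing in \eqref{tilde2} to $\int_{\bar t_2}^{t}\bar v(s)\,ds$ (as a function of $t$) and then to use the elementary comparison $\dot q\ge \bar v$ that already follows from \eqref{odeii}. If $t_2$ or $\bar t_2$ does not exist the claim is vacuous, and the degenerate configurations $t_2=t_1$ and $\bar t_2=t_1$ are handled exactly as in the proof of Proposition \ref{fact1}; so I focus on the case $t_1<\bar t_2\le t_2$, the inequality $\bar t_2\le t_2$ being Proposition \ref{fact1}.

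First I would rewrite $\bar v$ past $\bar t_2$: since $\bar v(\bar t_2)=0$, definition \eqref{vbar} gives $\bar v(t)=\int_{\bar t_2}^{t}g(s)\,ds$ for all $t\ge\bar t_2$. Next, on $[t_1,t_2]$ one has $q(t)\le q_s$, hence $N(q(t))\le N(q_s)$ because $N'=n>0$; inserting this into \eqref{odeii} yields the pointwise bound $\dot q(t)=\bigl(N(q_s)-N(q(t))\bigr)+\bar v(t)\ge \bar v(t)$ on $[t_1,t_2]$.

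Then, for an arbitrary $t\in[\bar t_2,t_2]$, I would integrate this bound from $\bar t_2$ to $t$ and apply Fubini to the resulting iterated integral:
\[
q(t)-q(\bar t_2)=\int_{\bar t_2}^{t}\dot q(s)\,ds\ \ge\ \int_{\bar t_2}^{t}\bar v(s)\,ds=\int_{\bar t_2}^{t}\!\!\int_{\bar t_2}^{s}g(r)\,dr\,ds=\int_{\bar t_2}^{t}(t-s)\,g(s)\,ds .
\]
Since the solution is positive we have $q(\bar t_2)>0$, while $q(t)\le q_s$; therefore $\int_{\bar t_2}^{t}(t-s)g(s)\,ds\le q(t)-q(\bar t_2)<q_s$ for every $t\in[\bar t_2,t_2]$. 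As $\tilde t_2$ is by definition the first time after $\bar t_2$ at which this weighted integral attains the value $q_s$, it cannot belong to $[\bar t_2,t_2]$, whence $\tilde t_2\ge t_2$ (in fact strictly).

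The argument is short and I do not expect a genuine obstacle; the one point deserving attention is that the estimate must be established for \emph{all} $t\in[\bar t_2,t_2]$ and not merely at $t=t_2$, since $t\mapsto\int_{\bar t_2}^{t}(t-s)g(s)\,ds$ need not be monotone when $g$ changes sign, and it is precisely this uniform bound that allows one to avoid any extra hypothesis on $g$.
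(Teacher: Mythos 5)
Your proof is correct and is essentially the paper's argument: the paper introduces the auxiliary trajectory $\tilde q$ with $\dot{\tilde q}(t)=\int_{\bar t_2}^t g$ and $\tilde q(\bar t_2)=0$, notes $\dot q\ge\dot{\tilde q}$ on $(\bar t_2,t_2)$ from \eqref{odeii} together with $N(q(t))\le N(q_s)$, and concludes by a trajectory comparison; your version unwinds $\tilde q$ via Fubini into the weighted integral $\int_{\bar t_2}^t(t-s)g(s)\,ds$ and phrases the conclusion as a uniform bound on $[\bar t_2,t_2]$, which is the same content. Your closing remark about needing the bound for \emph{all} $t\in[\bar t_2,t_2]$ (not just $t=t_2$) because $\tilde q$ need not be monotone is a sound observation that makes the paper's phrase ``hits the threshold before'' precise, but it is not a departure from the paper's method.
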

\begin{proof}
Consider $\tilde{q}(t)$ defined for $t\geq \bar{t}_{2}$ as the solution
to
\begin{equation*}
\dot{\tilde{q}}(t)=\int_{\bar{t}_{2}}^{t}g(s)ds,~\ \tilde{q}(\bar{t}_{2})=0
\end{equation*}%
We { {recall}} now the property \eqref{dq1}  of the original solution and note $N(q(t))<N(q_{s})$ for $t\in (\bar{t}_{2},t_{2})$ so that 
\begin{equation*}
\dot{q}(t) \ge \int_{\bar{t}_{2}}^{t}g(s)ds \geq \dot{\tilde{q}}(t).
\end{equation*}%
Since $q(\bar{t}_{2})>\tilde{q}(\bar{t}_{2})$, the trajectory of $q(t)$ lies above that of $\tilde{q}(t)$ for $t\in (\bar{t}_{2},t_{2})$ so that $q(t)$ hits the threshold $q_{s}$ before $%
\tilde{q}(t)$. In other words, $t_{2}<\tilde{t}_{2}$, where $\tilde{t}_{2}$ is the first time after $\bar{t}_2$ such that $\tilde{q}(t)=q_{s}$. This definition 
of $\tilde{t}_{2}$ is equivalent to that in \eqref{tilde2}. 
\end{proof}

\section{Discretization in time of the model ODE}

\subsection{Three schemes for ODE (\ref{ode})}

We first describe a straightforward Euler discretization in time of (\ref{ode}) rewritten as the system (\ref{syst}). Introducing the time step $\Delta t$ and the discrete times $t_k=k\Delta t$, 
$k=1,2,\ldots$ we thus consider the following

\begin{algorithm}[h!] 
\caption{(straightforward Euler)} 
\label{alg1}
 \begin{deflist}[Step 2.]
   \item[\rm Step 0.] Initialize ($v^0$, $q^0$).
   \item[\rm Step 1.] For $k=1,2,\ldots$ update ($v^k$, $q^k$) as follows
     \begin{equation}\label{step1}
     \begin{cases}
     \frac{v^k-v^{k-1}}{\Delta t} = -n(q^{k-1})v^k+g(t_k) \\
     \frac{q^k-q^{k-1}}{\Delta t}= v^k.
     \end{cases}
     \end{equation}
 \end{deflist}
\end{algorithm}

Algorithm \ref{alg1} is inspired
by the real fluid-particle simulations in which one first finds the new
velocity at each time step and then moves the particle with this velocity.
An immediately evident drawback of the scheme in Algorithm \ref{alg1} is that it does not necessarily provide a positive approximation $q^k$. We remind that negative
values of $q$ are unphysical. Moreover, even if approximations $q^k$ remain positive but become tiny at some time steps, one would require very small stepsize to obtain an accurate solution.

\begin{remark}
One may argue that this can be cured by resorting to a fully implicit scheme that couples the evaluation of the
velocity with the displacement of the particle:
     \begin{equation}\label{impl}
     \begin{cases}
     \frac{v^k-v^{k-1}}{\Delta t} = -n(q^{k})v^k+g(t_k) \\
     \frac{q^k-q^{k-1}}{\Delta t}= v^k.
     \end{cases}
     \end{equation}
Indeed, this scheme provides a positive solution for $q$, for example, in the important case $n(q)=\varepsilon/q$. 
To see this we eliminate $v^{k }$ from \eqref{impl}, which gives an equation for $q^k$ only
\[
\frac{q^{k}}{{\Delta t} }-\frac{\varepsilon q^{k-1}}{q^{k}}=\frac{q^{k-1}}{{\Delta t} }%
-\varepsilon +v^{k-1}+{\Delta t} g(t_{k}) 
\]%
The function of $q^{k}$ in the left-hand side is increasing form $-\infty $
to $\infty $ when $q^{k}$ goes from 0 to $\infty $. This means that there is
the unique positive solution $q^{k}$ for any given $v^{k-1}$ and $q^{k-1}$. However, scheme \eqref{impl} would be too expensive in a real fluid/particle simulation
where $n(q^k)$ should be recomputed after any change in $q^k$ through a numerical approximation of the Stokes equations.
\end{remark}

\begin{remark}
Difficulties related to the discretization of these differential equations have already been pointed out in \cite{Maury2007} (see Remark 3.1): 
it is shown that very small values of $q$ can be reached (below the smallest real number which can be stored by standard numerical softwares and also below intermolecular distances). 
Different approaches have been proposed in previous related works to deal with similar problems: in \cite[Section 5.2]{Maury2007}, the author suggests
the introduction of a cut-off function for the microscopic distance $\gamma=\varepsilon \ln (q)$, defined in terms of the roughness of the surface (see also \cite[Section 2.5]{Lefebvre09}); 
in \cite[Section 2.4.3]{Lefebvre09}, for the case of fluid/particle simulation, a constraint for the distance $q$ is defined in terms of the mesh size of the fluid domain. 
\end{remark}

In order to overcome these difficulties, in this work we propose to introduce a small threshold value $q_s>0$, to replace the exact solution $q(t)$ by the approximated one $\bar{q}(t)$, as summarized 
in \eqref{qbar} on the continuous level. Discretization in time of this idea introducing also an approximation of $\bar{v}(t)$ defined in \eqref{vbar} is detailed in Algorithm \ref{alg2}. 
Note that the exact threshold is replaced in Step 2 of this algorithm by the last available value of $q^k$ before passing below $q_s$, { {\it \textit{i.e.}}} $q^{k_1-1}$, so that the passage time $t_1$
is legitimately approximated by $t_{k_1-1}$.

\begin{algorithm}[h!] 
\caption{(with an {\it a priori} fixed threshold)} 
\label{alg2}
 \begin{deflist}[Step 2.]
   \item[\rm Step 0.] Initialize ($v^0$, $q^0$) and choose $q_s>0$.
   \item[\rm Step 1.] For $k=1,2,\ldots$ update ($v^k$, $q^k$) by \eqref{step1}
   until $q^k\leq q_s$ at some $k=k_1$. We then redefine $v^{k_1}=0$, $q^{k_1}=q^{k_1-1}$, initialize $\bar{v}$ by $\bar{v}^{k_1}=v^{k_1-1}+ g(t_{k_1})\Delta t$ and switch to Step 2.
 \item [\rm Step 2.] For $k=k_1+1,k_1+2,\ldots$ update $\bar{v}^k$ as follows
 \begin{equation}\label{step2}
 \bar{v}^k=\bar{v}^{k-1} + g(t_k)\Delta t
 \end{equation}
 keeping $q^{k}=q^{k_1-1}$, $v^{k}=0$ until $\bar{v}^k \geq 0$ at some $k=k_2$. We then abandon the calculation of $\bar{v}$ and switch to Step 3.
 \item [\rm Step 3.] For $k=k_2+1,k_2+2,\ldots$ update ($v^k$, $q^k$) as in Step 1. If $q^k$ goes again under $q_s$ at some time step, then reintroduce $\bar{v}$ and 
 keep switching between Steps 1 and 2 back and forth.
\end{deflist}
\end{algorithm}

As suggested by Proposition \ref{cor} a good choice for $q_s$ would be such that $1/n(q_s)=C\Delta t$ with some constant $C$ so that the criterion for switching from Step 1 to Step 2 can be rewritten as 
$n(q^k)\ge 1/(C\Delta t)$. Indeed, extrapolating the result of Proposition \ref{cor} to a general $g(t)$, we see then that the error of approximation of the exact return time $t_2$ 
by $\bar{t}_2\approx k_2\Delta t$ is of order $\Delta t$. Moreover, the error $|\bar{q}(t)-q(t)|$ is dominated by $|\bar{q}(t)-\bar{q}(t-\tau)|$ with $\tau$ of order $\Delta t$. 
Thus the error caused by the introduction of the threshold should be of of the same order as that of Euler scheme itself. However, an optimal choice of the constant $C$ above would 
require some \textit{a posteriori} error indicators to control the error of the Euler discretization. 

\begin{algorithm}[h!] 
\caption{(with adapted variable stepsize and automatically chosen threshold )} 
\label{alg3}
 \begin{deflist}[Step 2.]
   \item[\rm Step 0.] Initialize ($v^0$, $q^0$) for $t_0=0$, choose the first time step $\Delta t_1>0$, the minimal tolerated time step $\Delta t_{\min}>0$ and the error tolerance 
   per time step $tol>0$.
   \item[\rm Step 1.] For $k=1,2,\ldots$ set $t_k=t_{k-1}+ \Delta t_k$, update ($v^k$, $q^k$) by
     \begin{equation}\label{step1k}
     \begin{cases}
     \frac{v^k-v^{k-1}}{\Delta t_k} = -n(q^{k-1})v^k+g(t_k) \\
     \frac{q^k-q^{k-1}}{\Delta t_k}= v^k.
     \end{cases}
     \end{equation}
     and calculate the approximation for the error committed on this time step by $e_k=\max(|\ddot q(t_k)|,|\ddot v(t_k)|)\Delta t_k^2/2$ where $\ddot q(t_k)$, $\ddot v(t_k)$
     are computed using \eqref{ddotq}--\eqref{ddotv}. If $e_k \le tol$ and $q_k>0$ then we accept the just computed values ($v^k$, $q^k$) and proceed to the next time step 
     increasing the time step to $\Delta t_{k+1}=\sqrt{\frac{tol}{e_k}}\Delta t_k$. Otherwise, if $e_k > tol$ or $q_k \le 0$, we reject the approximation ($v^k$, $q^k$) and try 
     to recompute it by \eqref{step1k} with a smaller time step $\Delta t_k:=\min(\sqrt{\frac{tol}{e_k}}\Delta t_k,\Delta t_k/2)$. If the new approximation ($v^k$, $q^k$) is still 
     not sufficiently accurate, we try to diminish the time step again and again until an acceptable approximation ($v^k$, $q^k$) is computed and proceed only then to the next time step taking
     $\Delta t_{k+1}$ equal to the smallest value of $\Delta t_k$ used on step $k$. However, if in the process of reducing $\Delta t_k$ we come to a time step smaller than $\Delta t_{\min}$
     at some $k=k_1$,  we abandon the calculation of ($v^{k_1}$, $q^{k_1}$) and switch to Step 2 taking the initial values $\bar{v}^{k_1-1}=v^{k_1-1}$, $q_s=q^{k_1-1}$ and setting $\Delta t$ 
     to the current value of $\Delta t_k$.
  \item [\rm Step 2.] For $k=k_1,k_1+1,\ldots$ update $\bar{v}^k$ by \eqref{step2} keeping $q^{k}=q_s$, $v^{k}=0$ until $\bar{v}^k \geq 0$ at some $k=k_2$. 
  We then abandon the calculation of $\bar{v}$ and switch to Step 3.
 \item [\rm Step 3.] For $k=k_2+1,k_2+2,\ldots$ update ($v^k$, $q^k$) as in Step 1. If $q^k$ goes again under $q_s$ at some time step, then reintroduce $\bar{v}$ and 
 keep switching between Steps 1 and 2 back and forth.
 \end{deflist}
\end{algorithm} 

Generally speaking, a discretization of an ODE using the constant time step is often not optimal concerning the CPU time needed to achieve a desired accuracy. This is especially true 
for our ODE \eqref{ode} since the coefficient $n(q(t))$ can change enormously during a simulation. One could try therefore to modify our algorithms by introducing a non constant stepsize
chosen at each step using an error indicator. A general recipe for an optimal adaptation of the stepsize, according to \cite{Bu03} is to choose the stepsizes so that the discretization 
error per time step remains approximately constant during the whole simulation.  As is well known, the error per time step in a Euler scheme like that of Algorithm 1, is given to the leading order 
by $\max(|\ddot q(t_k)|,|\ddot v(t_k)|)\Delta t_k^2/2$ where $\Delta t_k=t_k-t_{k-1}$ is the stepsize on step $k$ which can be varying. Assuming that ($q^{k-1}$, $v^{k-1}$) and ($q^k$, $v^k$)
are Euler approximations at times $t_{k-1}$ and $t_k$ respectively, the second derivatives of $q$ and $v$ can be computed approximately by deriving the equations in \eqref{syst} with respect to time
and replacing the missing derivatives by finite differences. This gives
\begin{align}
\ddot q(t_k) &\approx -n(q^k)v^k+g(t_k) \label{ddotq}\\
\ddot v(t_k) &\approx n^2(q^k)v^k-n(q^k)g(t_k)-\frac{n(q^k)-n(q^{k-1})}{\Delta t_k}v^k+\frac{g(t_k)-g(t_{k-1})}{\Delta t_k} \label{ddotv}
\end{align}
Thus, denoting $E_k=\max(|\ddot q(t_k)|,|\ddot v(t_k)|)/2$ where the second derivatives are replaced by the formulas above, the strategy to choose the time step would be to require
$E_k\Delta t_k^2=tol=const$ where {\it tol} is prescribed tolerance. As this recipe can lead to extremely small $\Delta t_k$ we combine it with the threshold approximation by 
switching to $\bar{q}(t)$ only when $\Delta t_k$ becomes smaller than some minimal admissible time step size. This idea is implemented in Algorithm \ref{alg3}.

\subsection{Some numerical tests}

As an illustration, we consider the ODE \eqref{ode} with $n(q)=\varepsilon/q^{3/2}$ corresponding to the lubrication force in 2D. 
We report several numerical results setting
$$
 g(t)=
    \begin{cases}
    -2,\ &\textrm{for}\ t\le 2,\\
    2,\ &\textrm{for}\ t> 2,\\
    \end{cases}
$$
and the initial conditions $q(0)=1$, $\dot{q}(0)=0$. In all the cases, we have at our disposal a very accurate reference solution, which we call { {``exact''}} in what follows.

We first compare Algorithm 1 (without threshold) vs. Algorithm 2 with the threshold such that $n(q_s)=1/(20\Delta t)$. The choice of the coefficient $20$ in the last formula is somewhat arbitrary, but 
it works fine in our test cases. The first series of numerical experiments is performed taking $\varepsilon=0.1$.  The results are reported in Figure \ref{eps01}. 
Note that, although the solution obtained with Algorithm 1 is positive at $\Delta t = 0.01$ and smaller, the introduction of the threshold in Algorithm 2 seems to enhance the quality 
of the solution at large time. 

\begin{figure}[h!]
 \centerline{
     \includegraphics[scale=0.35]{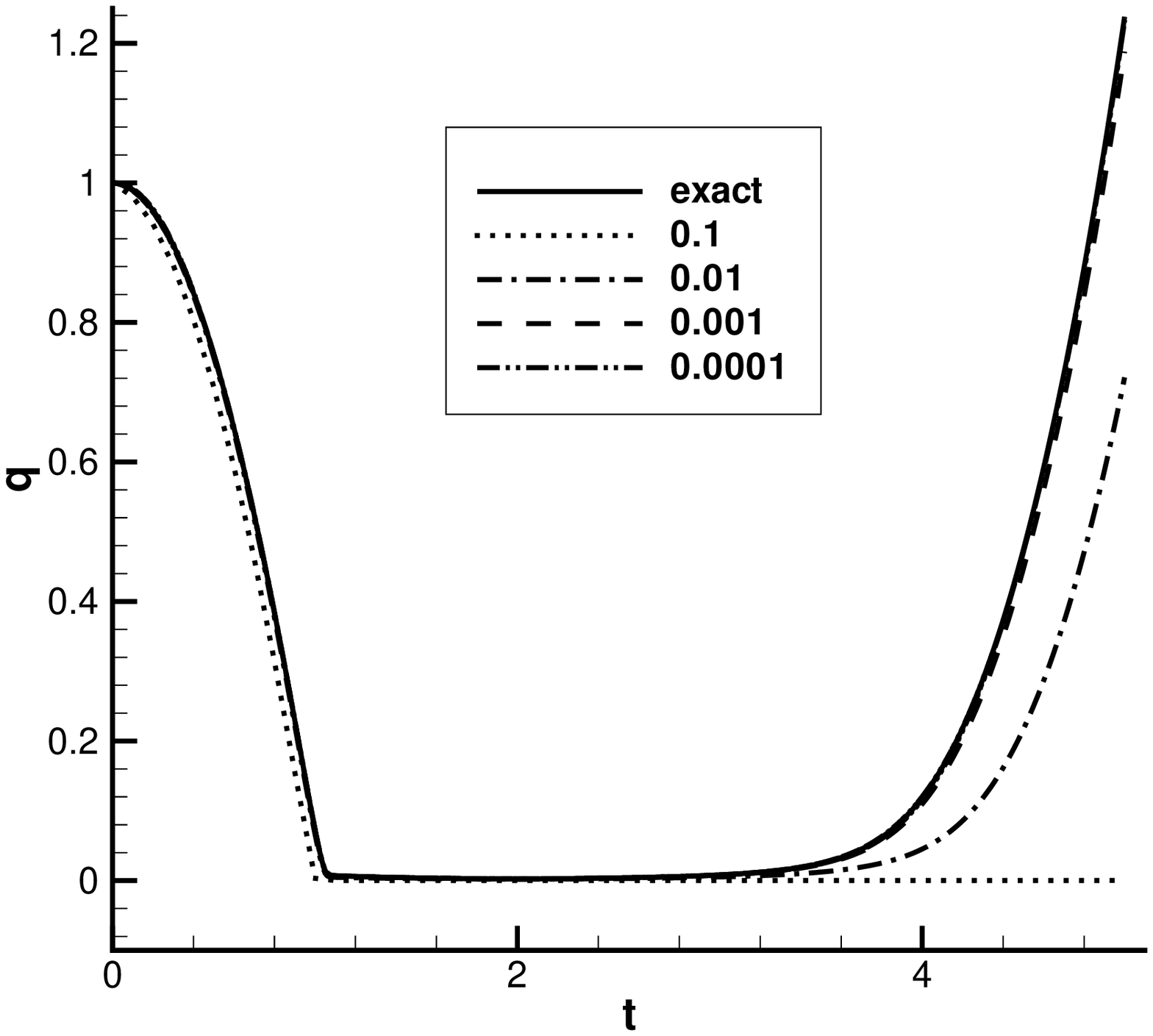}
     \qquad
     \includegraphics[scale=0.35]{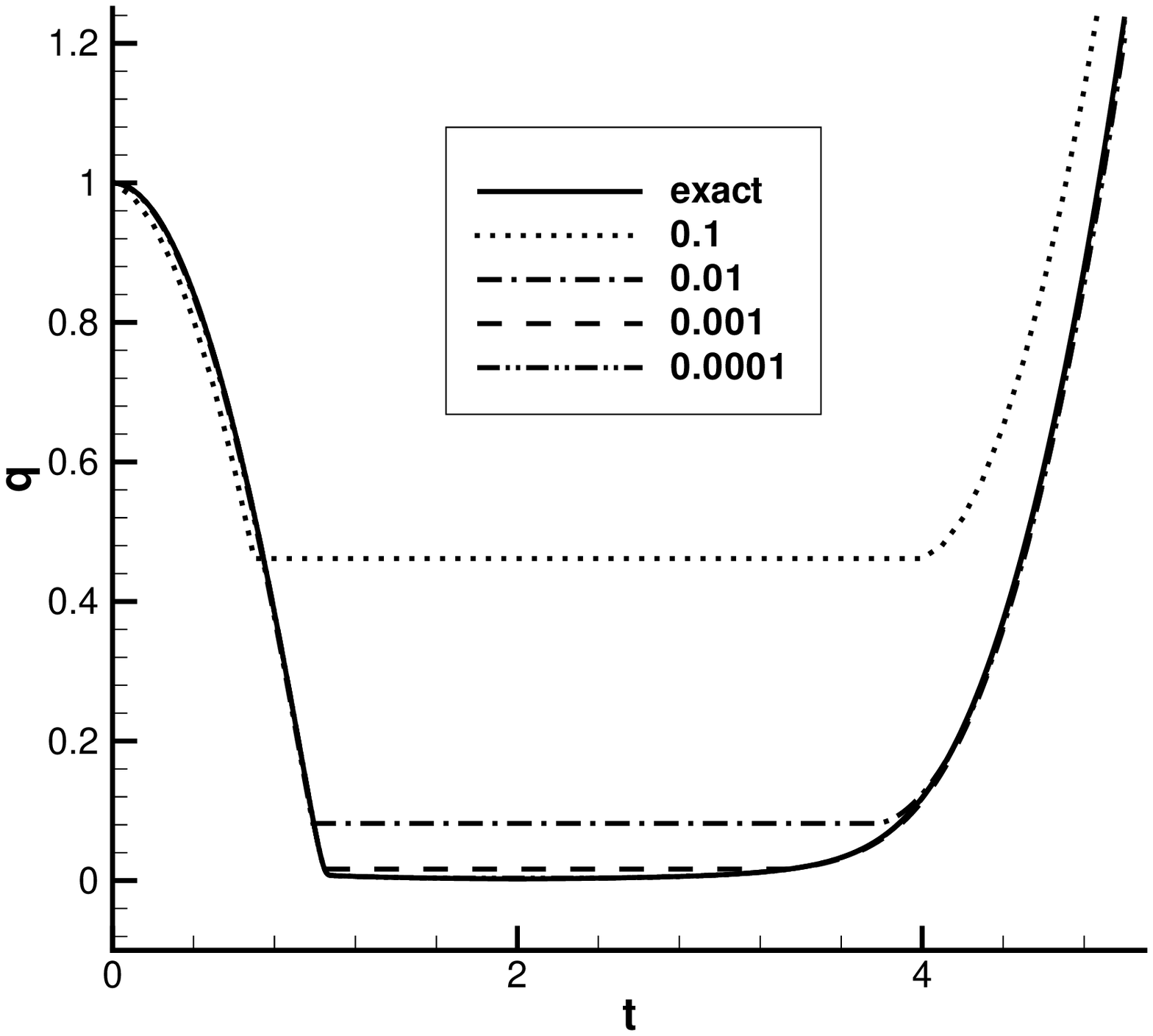}
 }
 \caption{Solution to the ODE \eqref{ode} with $n(q)=\varepsilon/q^{3/2}$, $\varepsilon=0.1$. Left: the exact solution and solutions obtained by Algorithm 1 with $\Delta t$ ranging from 
 $0.1$ to $0.0001$. Right: solutions obtained by Algorithm 2 with $\Delta t$ in the same range. The results obtained with $\Delta t=0.0001$ are visually indistinguishable from
 the exact solution.}
 \label{eps01}
\end{figure}

\begin{figure}[h!]
 \centerline{
    \includegraphics[scale=0.35]{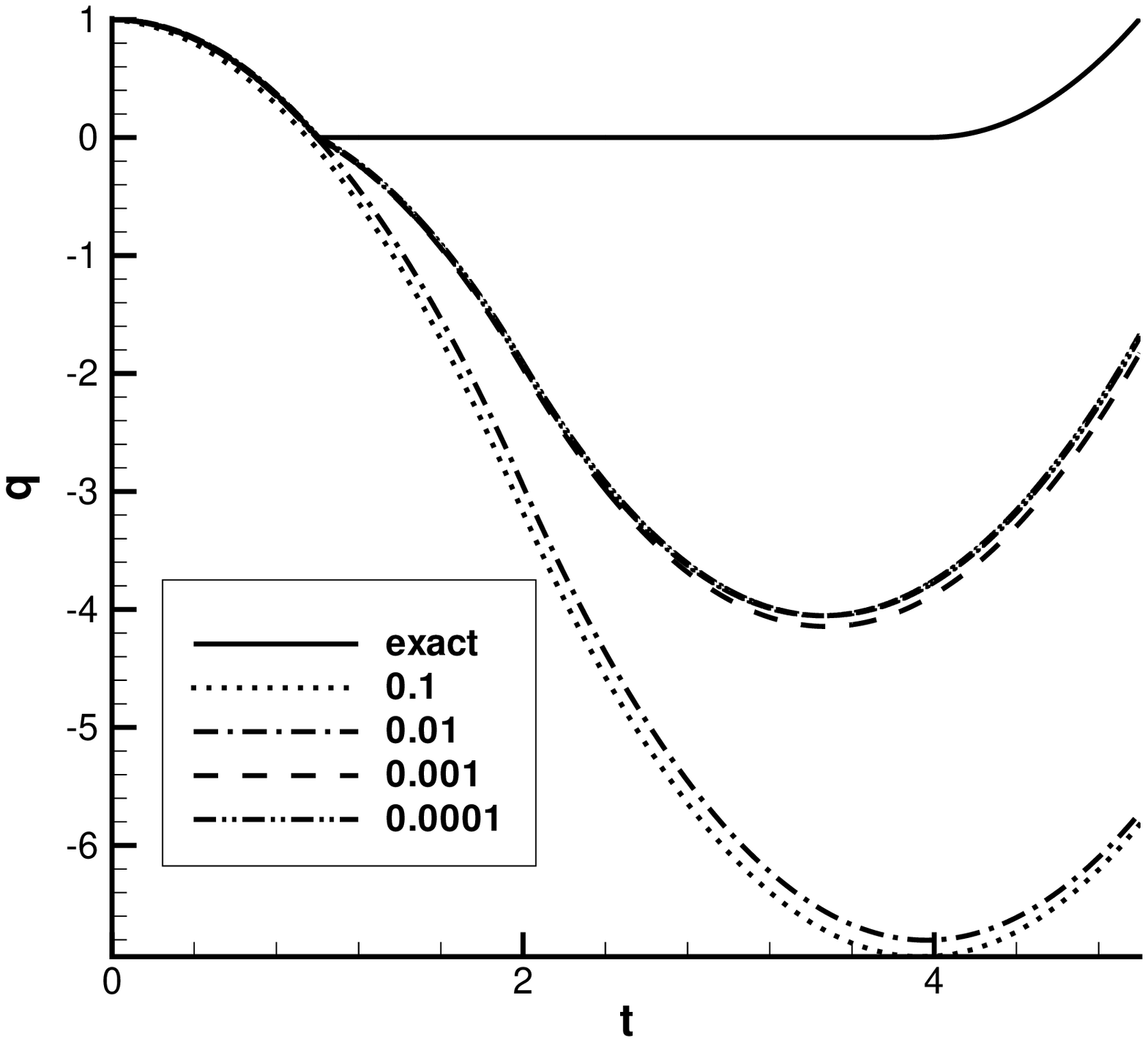}
     \qquad
    \includegraphics[scale=0.35]{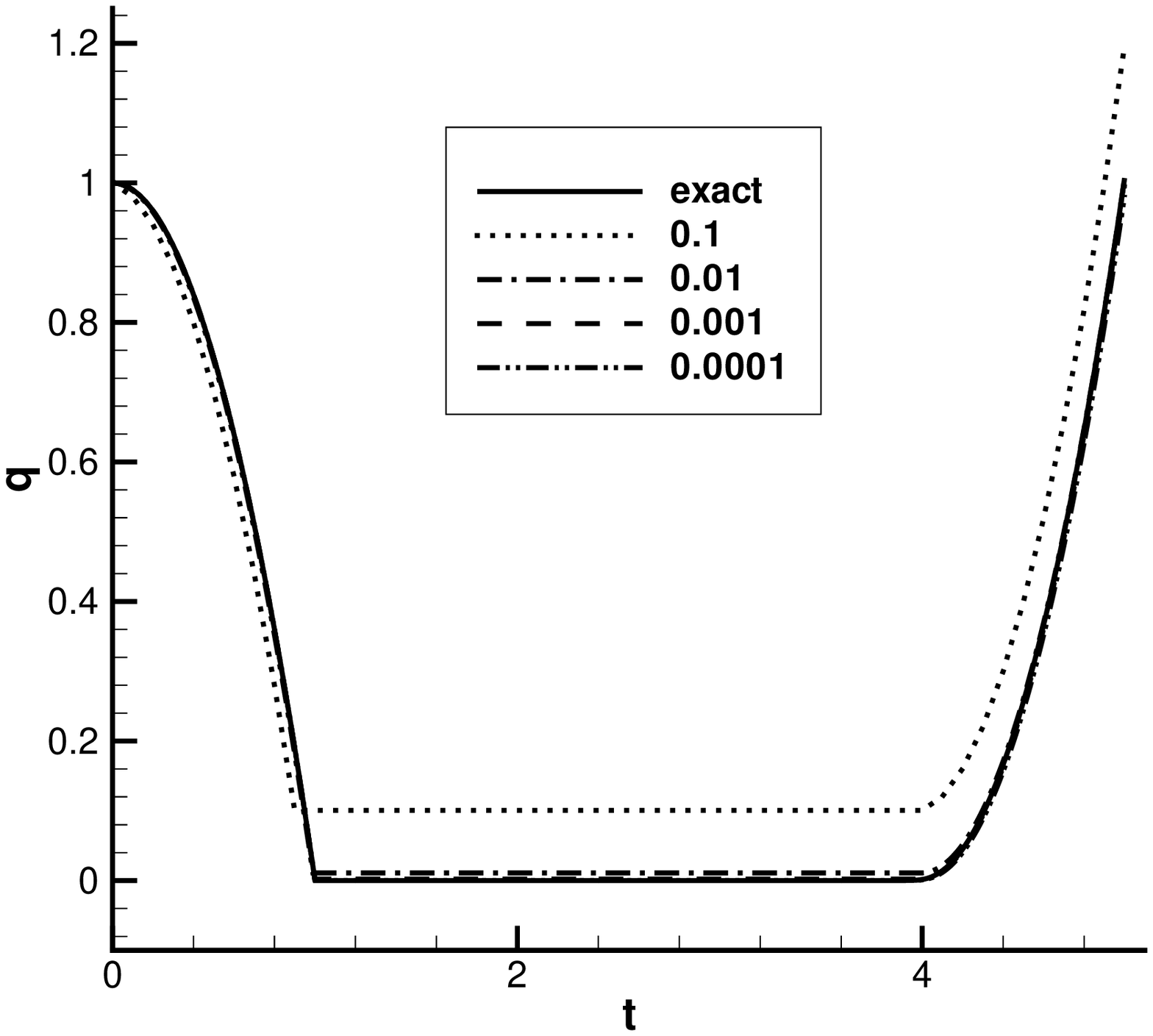}
 }
 \caption{Solution to the ODE \eqref{ode} with $n(q)=\varepsilon/q^{3/2}$, $\varepsilon=10^{-3}$. Left: the exact solution and solutions obtained by Algorithm 1 with $\Delta t$ ranging from 
 $0.1$ to $0.0001$. Right: solutions obtained by Algorithm 2 with $\Delta t$ in the same range. 
 The results obtained with $\Delta t\le 0.01$ on the right are visually indistinguishable from the exact solution.
 }
 \label{eps0001}
\end{figure}

\begin{figure}[h!]
 \centerline{
     \includegraphics[scale=0.35]{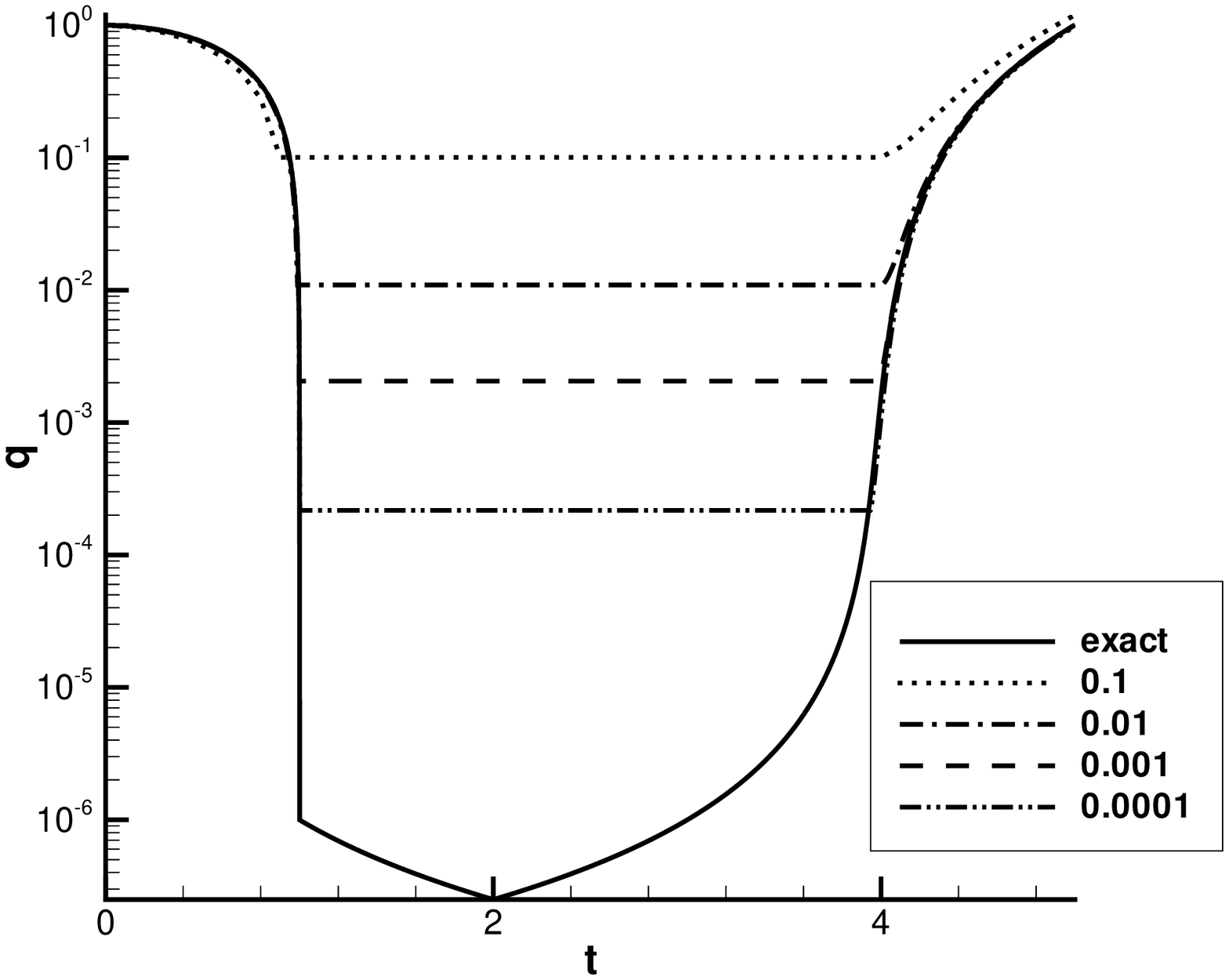}
      \qquad
     \includegraphics[scale=0.35]{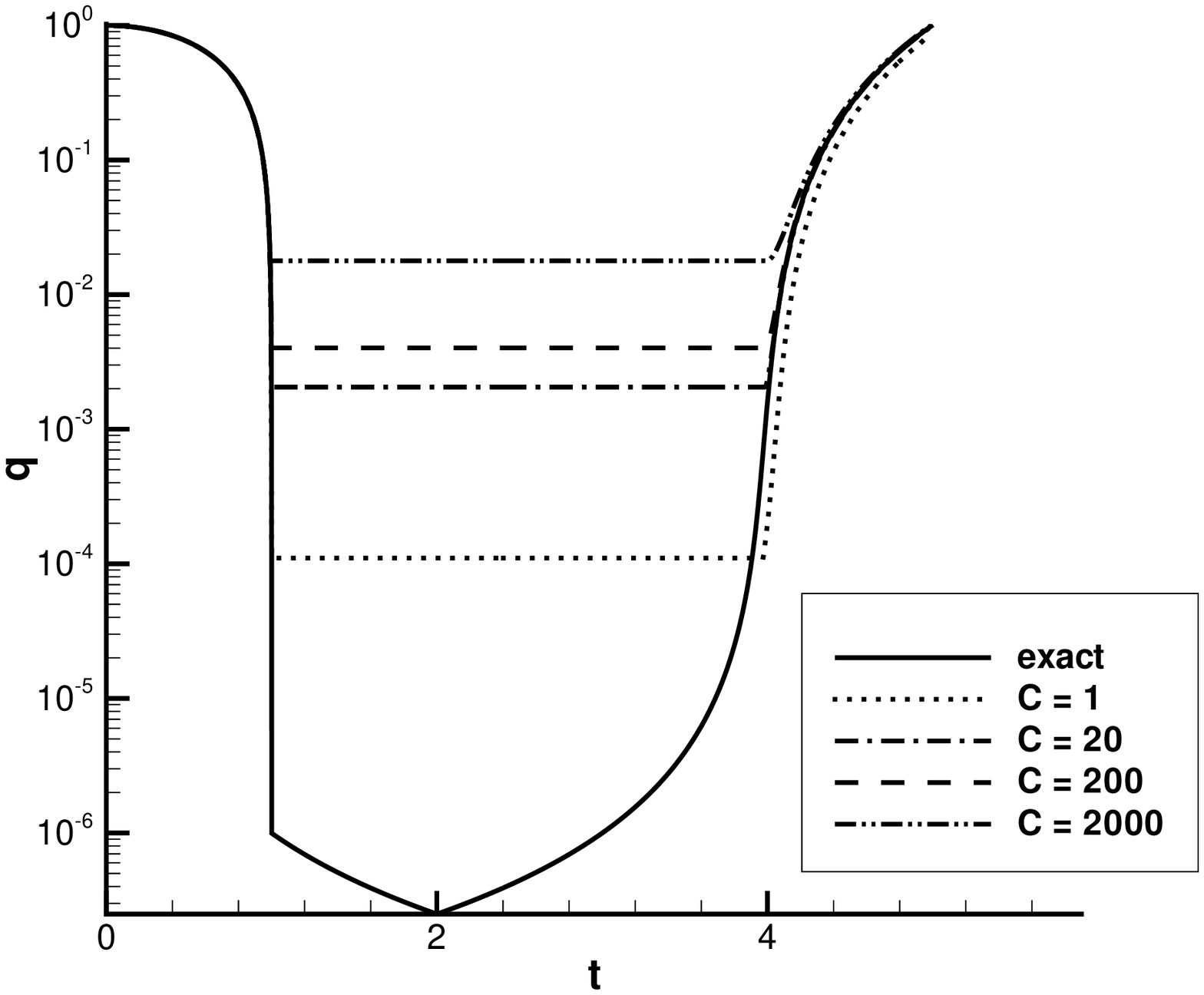}
 }
 \caption{Left: same results as on Figure \ref{eps0001}, \textit{i.e.} varying $\Delta t$ and choosing $q_s$ so that $n(q_s)=1/(20\Delta t)$.
 Right: same test with $\Delta t=0.001$ fixed and different values of $q_s$ calculated so that $n(q_s)=1/(C\Delta t)$ and $C$ varying in the range $[1,2000]$.
 }
 \label{eps0001log}
\end{figure}

\begin{figure}[h!]
 \centerline{
     \includegraphics[scale=0.35]{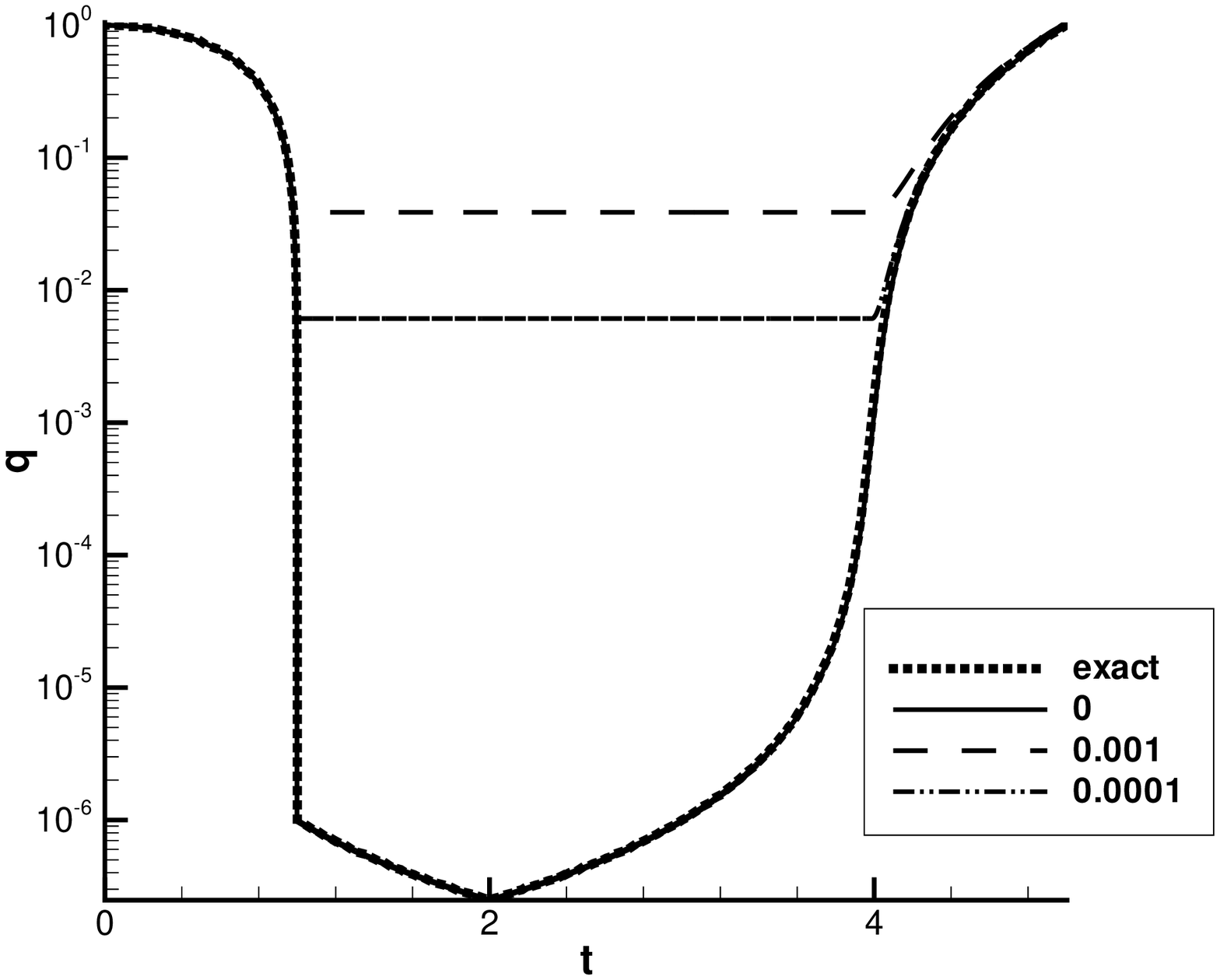}
      \qquad
     \includegraphics[scale=0.35]{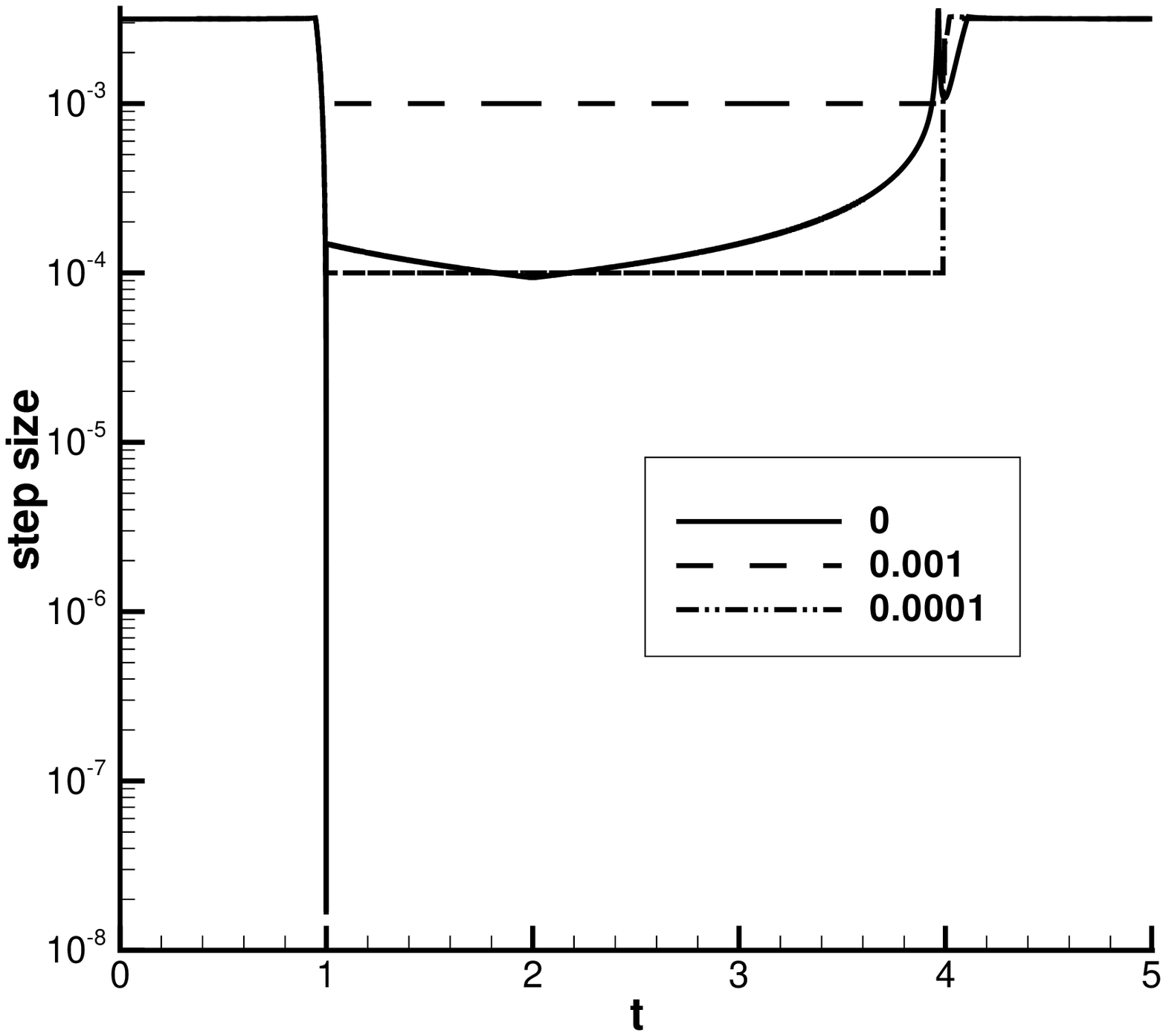}
 }
 \caption{Results obtained by Algorithm 3 with $tol=10^{-5}$ and different values of $\Delta t_{\min}$ specified in the legend. 
 Left: evolution of $q(t^k)$ as compared to the exact solution.
 Right: evolution of $\Delta t_k$.
 }
 \label{algo3}
\end{figure}

Let us now turn to another series of numerical experiments taking $\varepsilon=10^{-3}$.  The results are reported in Figure \ref{eps0001}. 
Algorithm 1 is now unable to produce a physically acceptable solution even with very small $\Delta t = 0.0001$. 
Algorithm 2, on the other hand, works fine. 
In order to observe better the quasi-contact region and the effect of introducing the threshold, we zoom on small distances by passing to a log-scale in Figure 
\ref{eps0001log} (left).  We report also some numerical experiments aiming at the determination of an optimal value of the threshold $q_s$. As said already, our strategy
is to choose it so that $n(q_s)=1/(C\Delta t)$ taking $C=20$ in all the preceding experiments. Figure \ref{eps0001log} (left) illustrates the results obtained fixing 
$\Delta t=0.001$ and taking three different values for $q_s$ by varying $C$. These results confirm two general observations: taking $q_s$ too small may 
deteriorate the accuracy of the solution (indeed, the original goal of the introduction of $q_s$ was to avoid the extremely small values of $q$). On the other hand,
taking $q_s$ too large may unnecessarily perturb the solution in the regions where a straight discretization could give more precise results. Thus, there should be an optimal
choice of $q_s$ which seems to be not too far from the formula mentioned above with $C=20$.

We conclude by a series of experiments using Algorithm 3 with adapted stepsize. We take again the same governing equation with $\varepsilon=10^{-3}$ and run Algorithm 3 
setting  the tolerance to $tol=10^{-5}$ and taking a number of values for $\Delta t_{\min}$. The results are reported in Figure \ref{algo3} with the evolution of $q(t^k)$ 
on the left and that of $\Delta t_k$ on the right. We give in particular the results with $\Delta t_{\min}=0$ so that Algorithm 3 is reduced to a standard time marching scheme
with automatically adapted step sizes. It is interesting to note that step sizes $\Delta t$ of the order $10^{-4}$ are actually sufficient to satisfy our error tolerance criterion almost 
everywhere apart from a very small range of time around 1 where the Algorithm chooses $\Delta t$ of the order $10^{-8}$. Setting  $\Delta t_{\min}$ to $10^{-3}$ or $10^{-4}$ avoid such small
time steps and gives fairly good results.

\section*{Acknowledgements.} We would like to thank the organizers of the conference ECM'09 (International Conference of Microfluidics and Complex flow) Lassaad El Asmi and Mourad Ismail 
for giving us the opportunity to present this work. We had a number of useful discussions during this conference, especially with Aline Lefebvre-Lepot and Fran\c cois Feuillebois, 
to whom we are very grateful.

\appendix
\section{An estimate for the drag (lubrication) force}

This part is devoted to computing the first order expansion of the viscous drag exerted by a fluid 
on a disk approaching a plane wall. Let $\Omega\subset\mathbb R^2$ be the half plane $\Omega= \{(x,y)  \text{ with } y>0\}$, 
the boundary of $\Omega$ is $\partial\Omega=\{(x,0),\ x\in\mathbb{R}\}$ (see Figure \ref{fig_Stokes}). We assume that the fluid fills the domain $\mathcal F_q := \Omega \setminus \overline{\mathcal B_q}$
where $\mathcal B_q := B((0,R+q),R)$ is the domain occupied by the solid disk of radius $R>0$ and placed at the distance $q >0$ from  the boundary is $\partial\Omega$. 
Our notations are lightly different here from that of Section \ref{general}. Indeed, domains $\mathcal F_q$ and $\mathcal B_q$ are now indexed by $q$ rather than $t$ since 
the eventual dependence of $q$ on time does not interest us in this Appendix. Assume moreover that the disk has a prescribed velocity $\mathbf{V}=V\mathbf{e}_2$ and 
zero angular velocity. The motion of the fluid is governed by Stokes equations \eqref{stokes} with $\mathbf{g}=0$ and zero boundary conditions at the infinity for $\mathbf{u}$. 
It is classical that (\ref{stokes}) is well-posed in a suitable framework
and has a unique solution $(\mathbf u,p)$ (the pressure $p$ is defined up to a constant).
In particular, the viscous drag force $\mathbf{F}$ exerted by a fluid flow $(\mathbf u,p)$ on $\mathcal B_q$ and computed by \eqref{eq_drag} is well-defined as a function of $V$ and $q.$ 
Our aim is to prove the following result:
\begin{theorem} \label{thm_Sigma}
The viscous drag force satisfies $\mathbf{F}=-n(q)\mathbf{V}$ where
$$
n(q) =  3 \sqrt{2} \pi \nu \left( \frac{R}{q} \right)^{\frac{3}{2}} \left[1 + \varepsilon(q)\right], 
$$
with  $\varepsilon(q) \to 0$ when $q \to 0.$
\end{theorem}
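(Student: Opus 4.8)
The plan is to characterize $n(q)$ through the viscous dissipation and then to pinch its leading order as $q\to 0$ between two matching bounds: an upper bound from the primal (velocity) variational principle for the Stokes system, and a lower bound obtained by localizing the dissipation to the thin gap and reducing it to a one-dimensional, flux-constrained minimization. First I would use the linearity of \eqref{stokes} in the boundary data to reduce to $V=1$, so that $\mathbf F=-n(q)\mathbf e_2$ with $n(q)\ge 0$ — the horizontal component of $\mathbf F$ vanishes because the flow is symmetric under $x\mapsto -x$ (whence $u_x$ is odd in $x$ and $u_y,p$ even) — and I would record the power balance
\[
n(q)=2\nu\int_{\mathcal F_q}|D(\mathbf u)|^2\,d\mathbf x ,
\]
$\mathbf u$ being the Stokes velocity. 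I would then isolate the gap $\mathcal G_q:=\mathcal F_q\cap\{|x|<\delta\}$ for a length $\delta=\delta(q)$ with $\sqrt{Rq}\ll\delta\ll R$, say $\delta=(Rq)^{1/2}|\log q|$; on $\mathcal G_q$ the clearance between disk and wall is $h(x)=q+\frac{x^2}{2R}+O(x^4/R^3)$, and all the estimates below rely on the lubrication rescaling $x=\sqrt{2Rq}\,s$, $h(x)=q(1+s^2)(1+o(1))$, together with $\int_{\mathbb R}\frac{s^2}{(1+s^2)^3}\,ds=\frac{\pi}{8}$.

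For the upper bound I would use that $n(q)\le 2\nu\int_{\mathcal F_q}|D(\mathbf w)|^2$ for every divergence-free $\mathbf w$ matching the boundary data, and take for $\mathbf w$: inside $\mathcal G_q$, the classical Reynolds lubrication field, i.e. a Poiseuille profile $w_x(x,\cdot)$ in $y$ vanishing on the wall and on the disk and carrying the squeeze flux $\int w_x\,dy=-x$, with $w_y$ determined by $\nabla\cdot\mathbf w=0$; outside $\mathcal G_q$, a divergence-free field of dissipation $O(|\log q|)$ carrying the datum $\mathbf e_2$ on the remainder of $\partial\mathcal B_q$ and $\mathbf 0$ on $\partial\Omega$, glued to the gap field through a cut-off applied to a stream function so as to remain divergence-free. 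A direct computation shows $|D(\mathbf w)|^2=\frac12(\partial_y w_x)^2(1+o(1))$ in $\mathcal G_q$, the extensional and cut-off contributions being smaller there by a relative factor $O(\sqrt{q/R})$, while $\int_{\mathcal G_q}(\partial_y w_x)^2=\int_{-\delta}^{\delta}\frac{12x^2}{h(x)^3}\,dx=(1+o(1))\,3\sqrt2\,\pi\,(R/q)^{3/2}$; since the contribution of $\mathcal F_q\setminus\mathcal G_q$ is $O(|\log q|)=o((R/q)^{3/2})$, this gives $n(q)\le 3\sqrt2\,\pi\,\nu\,(R/q)^{3/2}(1+o(1))$.

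For the lower bound I would keep only the gap in the power balance and start from the pointwise inequality $|D(\mathbf u)|^2\ge\frac12(\partial_y u_x+\partial_x u_y)^2$, then split $(\partial_y u_x+\partial_x u_y)^2\ge(1-\eta)(\partial_y u_x)^2-\eta^{-1}(\partial_x u_y)^2$ for $\eta\in(0,1)$. This reduces the matter to two ingredients: (i) the absolute estimate $\int_{\mathcal G_q}(\partial_x u_y)^2=o((R/q)^{3/2})$, to be extracted from the boundary conditions for $u_y$, the incompressibility constraint and the a priori energy bound of the previous step; and (ii) a slicewise argument — integrating $\nabla\cdot\mathbf u=0$ over curvilinear rectangles bounded by the wall and the disk identifies the horizontal flux as $\int u_x(x,\cdot)\,dy=-x$ in $\mathcal G_q$, and among profiles vanishing at both endpoints with prescribed flux $Q$ the functional $\int(\partial_y u_x)^2\,dy$ is minimized by the downward parabola, with minimal value $12Q^2/h^3$. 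Combining (i), (ii) and letting $\eta\to 0$ slowly yields $n(q)\ge\nu\int_{-\delta}^{\delta}\frac{12x^2}{h(x)^3}\,dx\,(1-o(1))=3\sqrt2\,\pi\,\nu\,(R/q)^{3/2}(1-o(1))$; together with the upper bound this is precisely $n(q)=3\sqrt2\,\pi\,\nu\,(R/q)^{3/2}(1+\varepsilon(q))$ with $\varepsilon(q)\to 0$, which is the claim.

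The step I expect to be the main obstacle is the lower bound, and within it ingredient (i): in contrast with the upper bound we have no explicit handle on $\mathbf u$, so the rotational and extensional parts of $D(\mathbf u)$ in the gap must be controlled purely from the equation, incompressibility and the crude energy bound — in practice through a Poincar\'e-type inequality across the thin gap for $u_y$ combined with interpolation. An alternative that circumvents this difficulty altogether, and which is presumably the form best suited to adapting the argument to the three-dimensional ball or to two approaching disks, is to derive the lower bound from the dual (complementary-energy) variational principle: one inserts as trial stress the Reynolds pressure $p(x)$ solving $\frac{d}{dx}(h(x)^3\,p'(x))=12\nu$ in the gap, together with the associated leading-order viscous stress; this again produces the constant $3\sqrt2\,\pi$ and requires only the estimation of an explicit one-dimensional integral.
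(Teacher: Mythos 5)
Your overall strategy coincides with the paper's: both characterize $n(q)$ by a variational (minimum dissipation) principle, obtain the upper bound from an explicit Poiseuille-type trial field localized in the gap, obtain the lower bound by reducing the dissipation integral slicewise to a one-dimensional minimization over profiles with prescribed flux and clamped endpoints, and then compute the asymptotic gap integral via the rescaling $x\mapsto\sqrt{q}\,x$ giving $\int_{\mathbb R} x^2(1+x^2/2)^{-3}dx=\pi/(2\sqrt2)$. The paper uses a stream-function $\psi$ and computes $E_D(q)=\min\int_\Omega|\nabla\mathbf w|^2$, bounding below by $\int_{\Omega_q}|\partial_{yy}\psi|^2$; you use the velocity and $2\nu\int|D(\mathbf u)|^2$. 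The one substantive difference, and the place where your proposal is strictly harder than the paper's proof, is precisely the ingredient you flagged as the main obstacle. By working with the symmetric gradient you pick out the component $D_{12}=\tfrac12(\partial_y u_x+\partial_x u_y)$ and must then show $\int_{\mathcal G_q}(\partial_x u_y)^2=o\bigl((R/q)^{3/2}\bigr)$ using only the a priori energy bound; but that bound gives $\int(\partial_x u_y)^2\lesssim(R/q)^{3/2}$, i.e.\ the same order, so no improvement comes for free, and your suggested Poincar\'e/interpolation route is genuinely delicate. The paper sidesteps this entirely: since for $\mathbf u\in Y_q$ one has $2\int_\Omega|D(\mathbf u)|^2=\int_\Omega|\nabla\mathbf u|^2$ (the cross term $\int\nabla\mathbf u:(\nabla\mathbf u)^T$ vanishes after integration by parts, because $\operatorname{div}\mathbf u=0$ and $\mathbf u$ is constant on each connected component of $\partial\mathcal F_q$), one may bound $|\nabla\mathbf u|^2\ge(\partial_y u_x)^2=(\partial_{yy}\psi)^2$ directly, with no cross term to dispose of. I would recommend replacing your step (i) by this observation; it turns your proposed lower bound into the paper's and removes the only real gap. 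Two smaller remarks: for a general competitor the slicewise flux is $-(C+x)$ for some constant $C$ (not necessarily $-x$), and the paper handles this by minimizing the resulting integral over $C$, which yields $C=0$; and your alternative lower bound via the complementary (dual) variational principle with the Reynolds pressure is a legitimate and arguably more portable route to 3D and to two-body configurations, but it is not the one the paper takes.
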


\begin{remark}
 The equivalent result in the three-dimensional setting is well-known but it is new in two dimensions.
Whatever the novelty of this result, the main contribution in this section is that we provide a new way for computing the first order expansion of the viscous drag.
This new method is more robust than the known computations \cite{Cooley&Oneill69,Dean&Oneill64,ONeill64,ONeill&Stewartson67}. In particular, we claim it extends to the three-dimensional
setting with small changes and can be adapted to other boundary conditions. As an illustration,
our method is a well-designed tool for estimating the distance between solid bodies in solutions to the fluid-structure interaction system with the full newtonian Navier Stokes 
equation on the fluid domain
\cite{DGVHillairet,Hillairet07,HillairetTakahashi09}. 
\end{remark}

The proof of Theorem \ref{thm_Sigma} is divided into three steps. 
First, we recall the variational formulation for the solution $(\mathbf u,p)$ to  (\ref{stokes})
and apply it to compute the associated drag coefficient $n(q)$ in the expression for viscous drag $\mathbf{F}=-n(q)\mathbf{V}$. 
Secondly, we deduce from the variational formulation the lower and upper bounds for $n(q)$. 
We conclude the proof by computing an asymptotic expansion of the bounds of this range.

\paragraph{1. The variational formulation for the Stokes system (\ref{stokes}).}
As $(\mathbf u,p)$ in (\ref{stokes}) and the formula \eqref{eq_drag} for the drag depend linearly on $\mathbf{V}$, we set $\mathbf{V} = \mathbf{e}_2$ in what follows. 
It is classical to extend the velocity $\mathbf{u}$ to the whole domain $\Omega$ by setting it to $\mathbf{e}_2$ inside $\mathcal{B}_q$ and 
to look for $\mathbf{u}$ in the following space 
$$
 D^{1,2}_0(\Omega) := \Big\{ \mathbf{w} \in L^1_{loc}(\Omega) \text{ with } \nabla \mathbf  w \in L^2(\Omega) , \text{ } \text{div} \ \mathbf w =0 \text{ in }\Omega
 \text{ and } \mathbf w = 0 \text{ on $\partial\Omega$} \Big\}.
$$
The solution of (\ref{stokes}) is associated with the minimization of Dirichlet integral $\mathbb{D}(\mathbf w) = \int_{\Omega} |\nabla(\mathbf w)(x,y)|^2 \text{d$x$d$y$}$ 
over the subset $Y_q$ of $D^{1,2}_0(\Omega) $:
$$
Y_q = \Big \{ \mathbf w \in D^{1,2}_0(\Omega) \text{ such that } \mathbf w_{|_{\mathcal B_q}} = \mathbf e_2 \Big\}. 
$$
Let us denote this minimum by $E_D(q)$, { {\it \textit{i.e.}}}
\begin{equation}
\label{eq_minim}
E_D(q) := \min \left\{  \mathbb{D}(\mathbf w) \ ; \ \mathbf{w} \in Y_q \right\},
\end{equation}
As the Dirichlet integral $\mathbb D$ is a strictly convex functional on $Y_q$ it has a unique minimizer $\mathbf u_q$  for any $q>0.$ It is easy to see that 
$\mathbf u_q$ gives the solution of \eqref{stokes}. Indeed, we have for any $\mathbf w \in \mathcal{C}^{\infty}_c(\mathcal F_q)$ 
such that $\text{div}\ \mathbf w =0,$
$$
 \int_{\Omega}  \nabla\mathbf u(x,y) : \nabla\mathbf w(x,y)\text{d$x$d$y$} = 0,
$$
which is the variational formulation of \eqref{stokes}. The pressure ${ {p_q}} \in L^2_{loc}(\Omega)$ can be then recovered as the Lagrange multiplier corresponding
to the constraint $\text{div}\ \mathbf u =0.$ We notice that ${ {p_q}}$ is unique up to a constant and the ellipticity of the Stokes problem implies that 
$(\mathbf u_q,{ {p_q}})$ is smooth on $\overline{\mathcal F}_q$ so that it  furnishes a classical solution to (\ref{stokes}). 
We refer to \cite{Galdi} for more details and also for a proof of the converse implication.

Given $\mathbf w \in \mathcal C^{\infty}_c(\Omega)$ such that $\text{div} \ \mathbf w = 0$ and $\mathbf w = \mathbf e_2$  on  $\mathcal B_q,$ a straightforward integration by parts yields
(recall that $\mathbf{n}$ is the unit normal looking into $\mathcal F_q$):
\begin{align*}
\nu\int_{\Omega} \nabla { {\mathbf u_q}}(x,y) : \nabla \mathbf w(x,y)\text{d$x$d$y$} &=
2\nu \int_{\mathcal F_q} D({ {\mathbf u_q}})(x,y) : \nabla\mathbf w(x,y)\text{d$x$d$y$}  \\ 
& = -\int_{\partial \mathcal B_q} [ 2\nu  D({ {\mathbf u_q}}) - { {p_q}} I_2]  \mathbf n \text{d$\sigma$} \cdot \mathbf e_2.
\end{align*}
Taking a suitable family of approximation of $\mathbf u_q$ (see \cite{Galdi} for details) we obtain in the limit:
$$
\nu E_D(q) = -\int_{\partial \mathcal B_q} [ 2\nu  D({ {\mathbf u_q}}) - { {p_q}} I_2]  \mathbf n \text{d$\sigma$} \cdot \mathbf e_2.
$$
The drag $\mathbf{F}$ is parallel to the velocity $\mathbf{V}$ for symmetry reasons, { {\it \textit{i.e.}}} $\mathbf{F}=(\mathbf{F}\cdot\mathbf{e}_2)\mathbf{e}_2$. 
We have thus the following result.
\begin{lemma}
\label{lemme_minim}
For any given $q>0$ and $\mathbf{V}$ parallel to $\mathbf{e}_2$, the drag force is given by $\mathbf{F}=-n(q)\mathbf{V}$ with $n(q)=\nu E_D(q).$ 
\end{lemma}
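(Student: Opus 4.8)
The plan is to finish off the computation that has already been set up in the paragraphs preceding the statement: the energy identity obtained there, together with a reflection-symmetry argument and the linearity in $\mathbf V$, yields the Lemma immediately. By linearity of \eqref{stokes} and of the drag formula \eqref{eq_drag} in the boundary velocity, it suffices to treat the normalized case $\mathbf V = \mathbf e_2$ that is already in force; for $\mathbf V = V\mathbf e_2$ the force is multiplied by $V$, so the assertion $\mathbf F = -n(q)\mathbf V$ with a scalar $n(q)$ follows once it is established for $\mathbf V = \mathbf e_2$.

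First I would take the surface-integral representation derived above,
$$
\nu E_D(q) = -\int_{\partial \mathcal B_q} [\, 2\nu D(\mathbf u_q) - p_q I_2 \,] \mathbf n \,\text{d}\sigma \cdot \mathbf e_2 ,
$$
which came from an integration by parts in the variational formulation (justified on the unbounded domain $\Omega$ by the approximation argument of \cite{Galdi}). Since $\mathbf u_q$ restricted to $\mathcal F_q$ is exactly the Stokes velocity $\mathbf u$ of \eqref{stokes} and $p_q$ the associated pressure, the right-hand side is precisely $-\,(\mathbf F\cdot\mathbf e_2)$ with $\mathbf F$ as in \eqref{eq_drag}. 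Hence $\mathbf F\cdot\mathbf e_2 = -\nu E_D(q)$.

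Next I would show $\mathbf F$ is parallel to $\mathbf e_2$. The domain $\mathcal F_q$ is invariant under the reflection $(x,y)\mapsto(-x,y)$, and the data (zero on $\partial\Omega$, $\mathbf e_2$ on $\partial\mathcal B_q$, zero at infinity) are invariant under the induced action $\mathbf w(x,y)\mapsto(-w_1,w_2)(-x,y)$ on divergence-free fields in $Y_q$; by strict convexity of $\mathbb D$ on $Y_q$ the minimizer is unique, so $\mathbf u_q$ inherits this symmetry ($u_{q,1}$ odd in $x$, $u_{q,2}$ even in $x$), and correspondingly $p_q$ is even in $x$. The first component of $[\,2\nu D(\mathbf u_q)-p_q I_2\,]\mathbf n$ is then an odd function of $x$ integrated over the curve $\partial\mathcal B_q$ symmetric about $x=0$, so $\mathbf F\cdot\mathbf e_1 = 0$.

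Combining, $\mathbf F = (\mathbf F\cdot\mathbf e_2)\mathbf e_2 = -\nu E_D(q)\,\mathbf e_2 = -\nu E_D(q)\,\mathbf V$, giving $n(q)=\nu E_D(q)$, and the general case $\mathbf V = V\mathbf e_2$ follows by the linearity noted at the start. I expect the only genuinely delicate ingredient to be the justification of the boundary-integral representation on the unbounded domain (decay of $\nabla\mathbf u_q$ and $p_q$, the choice of cut-off/approximating sequence), but this is supplied by the cited references and has already been carried out above; the symmetry step is elementary once uniqueness of the minimizer is invoked, and what remains is bookkeeping.
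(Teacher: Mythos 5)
Your proposal is correct and follows essentially the same route as the paper: the boundary-integral representation $\nu E_D(q)=-\mathbf F\cdot\mathbf e_2$ obtained from the variational formulation by integration by parts and an approximation argument, followed by symmetry to conclude $\mathbf F\parallel\mathbf e_2$, and linearity in $\mathbf V$. The only difference is one of detail: the paper dismisses the parallelism step with ``for symmetry reasons,'' while you spell out the reflection $(x,y)\mapsto(-x,y)$ together with the induced action on divergence-free fields and invoke uniqueness of the minimizer to deduce the parities of $u_{q,1},u_{q,2},p_q$ --- a welcome amplification rather than a deviation.
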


\paragraph{2. Upper and lower bounds for $E_D(q)$.}
We apply the above variational formulation to bound $E_D(q).$
To this end, given $q>0,$ we first prove that the Dirichlet integral of any $\mathbf u  \in Y_q$ is greater than
some $m_D(q)$ depending only on $q.$ This gives us a lower bound for $E_D(q).$
Then, we construct a suitable ${ {\tilde{\mathbf u}_q}} \in Y_q$ and compute its
Dirichlet integral $M_D(q)$. This gives an upper bound for $E_D(q).$ We finally compare
$m_D$ and $M_D$ for small values of $q$ to prove Theorem \ref{thm_Sigma}.

As the scale invariance of our problem implies 
that $n(\cdot)$ is actually a function of $q/R$, it is sufficient to consider the case $R=1$, which we admit in what follows. 
For any $x \in (-1,1),$ $y < 1+q$ such that $(x,y) \in \partial \mathcal B_q$
there holds $y = 1 + q - \sqrt{1-x^2} := \delta_q(x)$ and we denote 
$$
\Omega_q := \{(x,y) \in \mathbb R^2 \text{ such that } x \in (-1/2,1/2) \text{ and } y \in (0,\delta_q(x)) \}.
$$

Given a smooth $\mathbf u  \in Y_q,$ we introduce 
$$
\psi(x,y) = -\int_{0}^y u_1(x,z) \text{d$z$}.
$$
Then, straightforward computations imply that $\psi \in \mathcal{C}^{\infty}({ {\overline{\mathcal{F}}_q}})$ and  satisfy
$\mathbf u = \nabla^{\bot} \psi=(-\partial_y\psi, \partial_x\psi),$ so that we have the boundary conditions:
\begin{equation} \label{eq_BCpsi}
\left\{
\begin{array}{rclrcll}
\psi(x,y) &=& 0, & \partial_y \psi(x,y) = 0, & \text{ on $\partial\Omega,$} \\
\partial_x \psi(x,y) &=& 1, & \partial_y \psi(x,y) = 0, &\text{ on $\partial \mathcal B_q$}.    
\end{array}
\right.
\end{equation}
On the other hand, we compute the Dirichlet integral $\mathbb D(\mathbf u)$ with respect to $\psi.$
This yields:
$$
\mathbb D(\mathbf u) \geq \int_{\Omega_q} |\partial_{yy} \psi(x,y)|^2 \text{d$x$d$y$}. 
$$
We denote $I(\psi)$ the integral on the right-hand side of the above inequality. Thus, $\mathbb D(\mathbf{u})$ is greater than the minimum of $I(\psi)$ over smooth $\psi$ satisfying the boundary conditions \eqref{eq_BCpsi}.
This minimum is computed in the following lemma:
\begin{lemma}
Given $q >0$ and $\psi \in \mathcal{C}^{\infty}(\overline{\Omega}_q),$ satisfying boundary conditions \eqref{eq_BCpsi}
there holds $I(\psi) \geq I(\psi_q)$ where :
$$
\psi_q(x,y) = x \left[ \dfrac{y}{\delta_q(x)}\right]^{2} \left( 3 - 2 \left[ \dfrac{y}{\delta_q(x)}\right] \right),
\quad \forall \, (x,y) \in \overline{\Omega}_q.
$$
\end{lemma}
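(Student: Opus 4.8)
The plan is to prove the slightly stronger fact that $\psi_q$ is \emph{the} minimizer of $I$ over all smooth $\psi$ on $\overline{\Omega}_q$ satisfying \eqref{eq_BCpsi}; the asserted inequality is then immediate. The crucial structural observation is that $I(\psi)=\int_{\Omega_q}|\partial_{yy}\psi|^2$ involves only vertical derivatives, and that \eqref{eq_BCpsi} imposes conditions on the top and bottom of $\Omega_q$ (the portions of $\partial\mathcal B_q$ and $\partial\Omega$ inside it) but \emph{nothing} on the lateral sides $x=\pm 1/2$. Writing $I(\psi)=\int_{-1/2}^{1/2}\big(\int_0^{\delta_q(x)}|\partial_{yy}\psi(x,y)|^2\,dy\big)\,dx$ by Fubini, I would therefore bound the inner integral for each fixed $x\in(-1/2,1/2)$ separately: the slice $g(y):=\psi(x,y)$ is constrained by \eqref{eq_BCpsi} to satisfy $g(0)=0$, $g'(0)=0$, $g'(\delta_q(x))=0$, plus it attains some value $g(\delta_q(x))=:b(x)$ on $\partial\mathcal B_q$.

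First I would solve this one-dimensional problem. With $h:=\delta_q(x)$, the cubic polynomial
$$
g_{\min}(y)=b(x)\Big[\tfrac{y}{h}\Big]^2\Big(3-2\big[\tfrac{y}{h}\big]\Big)
$$
satisfies all four constraints, and it is optimal: for any admissible $g$ the remainder $r:=g-g_{\min}$ has $r(0)=r'(0)=r(h)=r'(h)=0$, so two integrations by parts together with $g_{\min}''''\equiv 0$ give $\int_0^h g_{\min}''\,r''\,dy=0$, whence $\int_0^h|g''|^2=\int_0^h|g_{\min}''|^2+\int_0^h|r''|^2\ge\int_0^h|g_{\min}''|^2$. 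A direct computation gives $\int_0^h|g_{\min}''|^2\,dy=12\,b(x)^2/h^3$, so that
$$
I(\psi)\ \ge\ \int_{-1/2}^{1/2}\frac{12\,b(x)^2}{\delta_q(x)^3}\,dx .
$$

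It then remains to control $b(x)$ and to recognize the right-hand side as $I(\psi_q)$. Differentiating the identity $b(x)=\psi(x,\delta_q(x))$ and using the chain rule with the boundary conditions $\partial_y\psi=0$, $\partial_x\psi=1$ on $\partial\mathcal B_q$ yields $b'(x)=\partial_x\psi+\delta_q'(x)\,\partial_y\psi=1$, so $b(x)=x+c$ for some constant $c$. Since $\delta_q(x)=1+q-\sqrt{1-x^2}$ is even in $x$, expanding $(x+c)^2=x^2+2cx+c^2$ makes the cross term $\int_{-1/2}^{1/2}2cx\,\delta_q(x)^{-3}\,dx$ vanish by oddness, leaving $c^2\int_{-1/2}^{1/2}\delta_q(x)^{-3}\,dx\ge 0$; hence
$$
I(\psi)\ \ge\ \int_{-1/2}^{1/2}\frac{12\,(x+c)^2}{\delta_q(x)^3}\,dx\ \ge\ \int_{-1/2}^{1/2}\frac{12\,x^2}{\delta_q(x)^3}\,dx .
$$
Finally I would check that $\psi_q$ is smooth on $\overline{\Omega}_q$ (since $\delta_q\ge q>0$), that it satisfies \eqref{eq_BCpsi}, and that its slices are exactly the optimal cubics $g_{\min}$ with $b(x)=x$; the last integral above therefore equals $I(\psi_q)$, which closes the proof.

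I expect the only genuinely delicate point to be the free constant $c$: the slice-by-slice minimization does not by itself pin down the value $b(x)$ attained on $\partial\mathcal B_q$, and a careless lower bound would lose a term. This is resolved by the reflection symmetry $x\mapsto -x$ of $\Omega_q$, which annihilates the $c$-linear contribution and leaves a nonnegative $c^2$-term. Everything else — the elementary Euler–Lagrange computation for $g_{\min}$, the evaluation $\int_0^h|g_{\min}''|^2=12b^2/h^3$, and the verification that $\psi_q$ is admissible and attains the bound — is routine.
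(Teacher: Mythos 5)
Your proof is correct and follows essentially the same route as the paper's: a slice-by-slice (in $x$) reduction to a one-dimensional clamped minimization whose solution is the cubic $t^2(3-2t)$, the observation that the trace $\psi(x,\delta_q(x))$ must equal $x+c$ for some constant, and elimination of $c$ by a parity argument. The only cosmetic differences are that you avoid the paper's division by the potentially vanishing $(x+C)$ when forming the normalized slice, and you spell out the orthogonality and parity computations the paper dispatches as ``standard.''
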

\begin{proof}
Given any $\psi(x,y) \in \mathcal{C}^{\infty}(\overline{\Omega}_q),$ satisfying  \eqref{eq_BCpsi}, boundary conditions on $\partial \mathcal B_q$
imply $\psi(x,\delta_q(x)) = C + x,$ $\forall x \in (-1/2,1/2)$, with some $C \in \mathbb R$. Thus $\psi$ satisfies $\forall x \in (-1/2,1/2)$
$$
\psi(x,0) = 0, \quad \psi(x,\delta_q(x)) = C+x ,\quad \frac{\partial\psi}{\partial y}(x,0) =0, \quad \frac{\partial\psi}{\partial y}(x,\delta_q(x)) = 0.
$$ 
Then, for arbitrary $x \in (-1/2,1/2),$ excepting eventually $x=-C$, we introduce $\chi_x(t) = \psi(x,t\delta_q(x))/(x+C)$, $t\in(0,1)$, which satisfies 
\begin{equation}\label{bchi}
\chi_x(0) = 0, \quad \chi_x(1) = 1 ,\quad \chi_x'(0) =0, \quad \chi_x'(1) = 0.
\end{equation}
By a standard optimization argument we see that the minimum of $\int_{0}^{1} |\chi_x''(t)|^2 dt$ over all smooth $\chi_x(t)$ satisfying \eqref{bchi} is attained on the function $\eta(t)$ 
such that $\eta^{(4)}(t)=0$ on $(0,1)$, which is given by $\eta(t)=t^2(3-2t)$.
This yields
\begin{align}
\int_{\Omega_q} |\partial_{yy} \psi(x,y)|^2 \text{d$x$d$y$} &= \int_{-1/2}^{1/2}\int_{0}^{1}   \frac{(x+C)^2}{|\delta_q(x)|^3} |\chi_x''(t)|^2 \text{d$t$d$x$} \notag\\
&\geq \int_{-1/2}^{1/2}\int_{0}^{1}   \frac{(x+C)^2}{|\delta_q(x)|^3} |\eta''(t)|^2 \text{d$t$d$x$}. \label{intmD}
\end{align}
Minimizing the last integral with respect to $C,$ we obtain $C=0$. The last inequality becomes equality if we take $\psi(x,y)=\psi_q(x,y)=x\eta(y/\delta_q(x))$.
\end{proof}

Finally, a lower bound is given by 
$$
m_D(q) = \int_{\Omega_q} |\partial_{yy} \psi_q(x,y)|^2 \text{d$x$d$y$}.
$$
On the other hand, it is always possible to extend $\psi_q$ on the whole $\Omega_q$ to some 
$\tilde{\psi}_q \in \mathcal{C}^{\infty}(\mathcal F_q) \cap \mathcal{C}^{\infty}(\mathcal B_q) \cap \mathcal{C}(\Omega)$ 
and such that ${ {\tilde{\mathbf{u}}_q := \nabla \tilde{\psi}_q}} \in Y_q.$ For instance one might interpolate (in the $x$-variable)
$\psi_q$ with a suitable truncation of $\psi_0(x,y) = x.$  
As, outside $\Omega_q,$ the solid disk remains at a positive distance of $\partial\Omega,$ the truncation and interpolation
function can be made independent of $q.$ In particular there holds:
$$
\int_{\Omega} |{ {\nabla \tilde{\mathbf u}_q}}(x,y)|^2 \text{d$x$d$y$} = m_D(q) + r(q) + \int_{\Omega_q}\left[  |\partial_{xx} \psi_q(x,y)|^2 + |\partial_{xy} \psi_q(x,y)|^2 \right] \text{d$x$d$y$} := M_D(q) ,
$$
with $r$ a bounded function of $q$. 

\paragraph{3. Asymptotic expansion of $m_D(q)$ and $M_D(q)$.}
\begin{lemma}
There holds, for small values of $q$ : 
\begin{equation}\label{mDass}
m_D(q) = 3 \sqrt{2} \pi \dfrac{1 + \varepsilon(q)}{q^{3/2}},
\end{equation}
with $\varepsilon(q) \to 0$ when $q \to 0.$
\end{lemma}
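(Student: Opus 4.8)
The plan is to turn $m_D(q)$ into an explicit one–dimensional integral and then to extract its leading order as $q\to 0$ by a rescaling concentrated near $x=0$. First I would compute $\partial_{yy}\psi_q$ explicitly. Writing $\eta(t)=t^2(3-2t)$, so that $\eta''(t)=6(1-2t)$, one has $\psi_q(x,y)=x\,\eta\bigl(y/\delta_q(x)\bigr)$, hence $\partial_{yy}\psi_q(x,y)=x\,\eta''\bigl(y/\delta_q(x)\bigr)/\delta_q(x)^2 = 6x\bigl(1-2y/\delta_q(x)\bigr)/\delta_q(x)^2$. Integrating $|\partial_{yy}\psi_q|^2$ first in $y\in(0,\delta_q(x))$ and substituting $t=y/\delta_q(x)$ gives $\int_0^{\delta_q(x)}|\partial_{yy}\psi_q(x,y)|^2\,dy = \bigl(\int_0^1 36(1-2t)^2\,dt\bigr)\,x^2/\delta_q(x)^3 = 12\,x^2/\delta_q(x)^3$, so that
$$ m_D(q) = 12\int_{-1/2}^{1/2}\frac{x^2}{\delta_q(x)^3}\,dx =: 12\,J(q), $$
and everything reduces to the asymptotics of $J(q)$ as $q\to 0$.

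Next I would analyse $J(q)$. Recall $\delta_q(x)=q+\bigl(1-\sqrt{1-x^2}\bigr)$ and use the elementary identity $1-\sqrt{1-x^2}=x^2/(1+\sqrt{1-x^2})$, which gives $\tfrac12 x^2\le 1-\sqrt{1-x^2}\le x^2$ on $(-1/2,1/2)$; in particular the integrand is largest near $x=0$, where $\delta_q(x)\approx q+\tfrac12 x^2$. The change of variables $x=\sqrt{2q}\,s$ gives
$$ J(q) = \frac{2\sqrt2}{q^{3/2}}\int_{|s|<1/(2\sqrt{2q})}\frac{s^2}{\bigl(\delta_q(\sqrt{2q}\,s)/q\bigr)^3}\,ds, $$
and since $\delta_q(\sqrt{2q}\,s)/q = 1+2s^2/\bigl(1+\sqrt{1-2qs^2}\bigr)$ lies in $[\,1+s^2,\,1+2s^2\,]$, the integrand is dominated uniformly in $q$ by the integrable function $s^2/(1+s^2)^3$ and converges pointwise to it as $q\to 0$. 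Dominated convergence then yields
$$ q^{3/2}J(q)\ \longrightarrow\ 2\sqrt2\int_{-\infty}^{\infty}\frac{s^2}{(1+s^2)^3}\,ds = 2\sqrt2\cdot\frac{\pi}{8} = \frac{\sqrt2\,\pi}{4}, $$
whence $m_D(q)=12J(q)=\dfrac{3\sqrt2\,\pi}{q^{3/2}}\bigl(1+\varepsilon(q)\bigr)$ with $\varepsilon(q)\to 0$, i.e. \eqref{mDass}.

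The differentiation of $\psi_q$ and the elementary integrals $\int_0^1(1-2t)^2\,dt=\tfrac13$ and $\int_{\mathbb R}s^2(1+s^2)^{-3}\,ds=\pi/8$ (the latter via $s^2(1+s^2)^{-3}=(1+s^2)^{-2}-(1+s^2)^{-3}$) are routine. The only genuine point requiring care is the passage to the limit in $J(q)$: one must verify that the rescaled integrand is indeed uniformly dominated, and, equivalently, that the part of the original integral over $x$ bounded away from $0$ is negligible — there $\delta_q(x)$ stays bounded below uniformly for small $q$, so that contribution is $O(1)=o(q^{-3/2})$. I expect this dominated–convergence step to be the main, if modest, obstacle.
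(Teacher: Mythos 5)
Your proof is correct and follows essentially the same approach as the paper: reduce $m_D(q)$ to $12\int_{-1/2}^{1/2}x^2/\delta_q(x)^3\,dx$, rescale near $x=0$, and pass to the limit by dominated convergence using the pointwise bound $\delta_q\ge q + x^2/2$. The only cosmetic difference is that you rescale by $\sqrt{2q}$ whereas the paper rescales by $\sqrt{q}$, and you spell out the $y$-integration a bit more explicitly; the domination argument and the resulting Gaussian-type integral $\int_{\mathbb R}s^2(1+s^2)^{-3}\,ds=\pi/8$ are the same.
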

\begin{proof}
As already seen in the proof of the preceding Lemma, $m_D(q)$ is given by the integral \eqref{intmD} with $C=0$ and $\eta(t)=t^2(3-2t)$. 
Integrating with respect to $t$ and performing a change of variables $x\to\sqrt{q}x$ 
yields
$$
m_D(q) = 12 \displaystyle{\int_{-1/2}^{1/2}  \dfrac{x^2}{|\delta_q(x)|^3} \text{d$x$}}
= \dfrac{12}{q^{3/2}}  \displaystyle{\int_{-1/(2\sqrt{q})}^{1/(2\sqrt{q})}} \dfrac{ x^2 \text{d$x$}}{(\delta_q(\sqrt{q}x)/q)^3}
$$
Expanding to the first order, we have:
$$
\lim_{q \to 0}\dfrac{ x^2 }{(\delta_q(\sqrt{q}x)/q)^3} =
\lim_{q \to 0} \dfrac{x^2}{[(1+q - \sqrt{1- qx^2})/q]^3} = \dfrac{x^2}{(1+x^2/2)^3}.
$$
We remark moreover that standard geometric arguments imply that $\delta_q(x) \geq q + x^2/2$ or $\delta_q(\sqrt{q}x)/q \geq 1 + x^2/2$
so that we can apply the Lebesgue theorem to obtain:
$$
\lim_{q \to 0} \displaystyle{\int_{-1/(2\sqrt{q})}^{1/(2\sqrt{q})}} \dfrac{ x^2 \text{d$x$}}{(\delta_q(\sqrt{q}x)/q)^3} = \int_{-\infty}^{\infty} \dfrac{x^2 \text{d$x$}}{(1+x^2/2)^{3}} = \dfrac{\pi}{2\sqrt{2}}.
$$
\end{proof}

One can also check that $\partial_{yy}\psi_q$ is the second derivative of $\psi_q$ which diverges the fastest when
$q$ goes to $0.$ Thus,  $M_D(q)-m_D(q)=o(m_D(q))$ as $q\to 0$ so that \eqref{mDass} holds also for $M_D(q)$, which proves Theorem \ref{thm_Sigma}.

\begin{remark}\label{rem3D}
The same method can be easily adapted to the three dimensional case. More precisely, the result of Lemma \ref{lemme_minim} is still valid : the viscous drag exerted by a fluid on a sphere can be computed as the result of a minimization problem of the same type as the one given by equation (\ref{eq_minim}). Next, since the problem is axisymmetric, one can use cylindrical coordinates and perform the same asymptotical analysis as in the 2D case, in order to obtain that
$$
n(q) =  6\pi V \nu \frac{R^2}{q} \left[1 + \varepsilon(q)\right], 
$$
with  $\varepsilon(q) \to 0$ when $q \to 0.$

\end{remark}

\subsection*{Numerical evaluation of the drag force.}

In order to compare the theoretical estimate given by Proposition \ref{thm_Sigma} to a numerical evaluation of the force $\mathbf{F}$, we carry out the following simulations: we begin with solving the Stokes problem in a fluid with viscosity $\nu=1$ and then compute the corresponding force exerted by the fluid on a particle of radius $R=0.1$ situated above a plane at a distance $q$. 
The computations are performed by using the Finite-Element solver FreeFem++ \cite{freefem} and the results are plotted in Figure \ref{fig_Force} (dashed line with squares). 
They agree with the asymptotic expansion $F\sim 3\sqrt{2}\pi V\nu \left(\frac{R}{q}\right)^{\frac{3}{2}}$ (solid line).

\begin{figure}[h!]
\centerline{
     \includegraphics[scale=0.35,angle=-90]{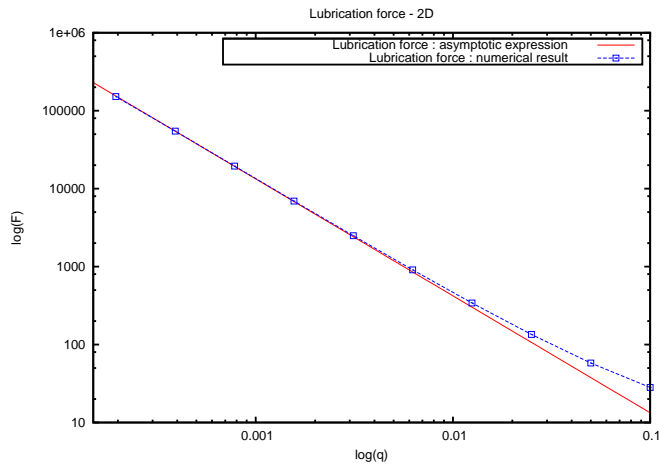}
}
     \caption{Approximation of the lubrication force}
     \label{fig_Force}
\end{figure}

\section{Non-dimensional form of the governing equations (\ref{odem}) involving the lubrication force}
Let us first consider the 2D case. { {Eliminating}} the velocity $v$ from (\ref{odem}) and specifying $m=\rho_s\pi R^2$ with $\rho_s$ the density of the solid, we arrive at 
$$
\rho_s\pi R^2\ddot{q}=-3\sqrt{2}\pi \nu \left( \frac{R}{q}\right)^{\frac{3}{2}}\dot{q} + (\rho_s-\rho_f)\pi R^2 g.
$$
In order to understand the typical values of parameters in this ODE, we should pass to non-dimensional variables,
which can be introduced as follows:
$$
q=Rq',\ t=Tt',\ g=g_{char}g'
$$
where the radius $R$ is used as the length scale, the time scale is denoted by $T$ and $g_{char}$ is a typical value of the external force density, so that
{ {$g_{char}\sim 10 m/s^2$}}. We choose then the time scale $T$ so that the non-dimensional external force becomes of order 1,
{ {\it \textit{i.e.}}} $\frac{T^2}{R}\frac{\rho_s-\rho_f}{\rho_s} g_{char} =1$ and obtain the non-dimensional ODE (dropping the primes) \eqref{ode2D}.

The asymptotic in the 3D case is is given by $n(q)=6\pi\nu\frac {R^2}q$ (cf. Remark \ref{rem3D}) so that starting from the dimensional ODE
$$
\rho_s\frac{4}{3}\pi R^3\ddot{q}=-6\pi \nu R^2 \frac{\dot q}{q} + (\rho_s-\rho_f)\frac{4}{3}\pi R^3 g \qquad \textrm{(3D).}
$$
and using the similar non-dimensionalizations as before we obtain \eqref{ode3D}.

\bibliographystyle{plain}
\bibliography{biblio}
\end{document}